\numberwithin{equation}{section}
\newtheorem{theorem}{Theorem}[section]
\newtheorem{definition}[theorem]{Definition}
\newtheorem{remark}[theorem]{Remark}
\newtheorem{lemma}[theorem]{Lemma}
\newtheorem{example}[theorem]{Example}
\newtheorem{corollary}[theorem]{Corollary}
\newcommand{\edge}{\ar@{-}}
\newcommand{\pf}{\noindent\begin {proof}}
\newcommand{\epf}{\end{proof}}
\def\Ker{\mathop{\rm Ker}\nolimits}
\def\mod{\mathop{\rm mod}\nolimits}
\def\id{\mathop{\rm id}\nolimits}
\def\pd{\mathop{\rm pd}\nolimits}
\def\min{\mathop{\rm min}\nolimits}
\def\sup{\mathop{\rm sup}\nolimits}
\def\inf{\mathop{\rm inf}\nolimits}
\def\add{\mathop{\rm add}\nolimits}
\def\gldim{\mathop{\rm gl.dim}\nolimits}
\def\rad{\mathop{{\rm rad}}\nolimits}
\def\top{\mathop{{\rm top}}\nolimits}
\def\tridim{\mathop{\rm tri.dim}\nolimits}
\def\sup{\mathop{\rm sup}\nolimits}
\def\lim{\mathop{\underrightarrow{\rm lim}}\nolimits}
\def\mod{\mathop{\rm mod}\nolimits}
\def\id{\mathop{\rm id}\nolimits}
\def\pd{\mathop{\rm pd}\nolimits}
\def\min{\mathop{\rm min}\nolimits}
\def\sup{\mathop{\rm sup}\nolimits}
\def\inf{\mathop{\rm inf}\nolimits}
\def\add{\mathop{\rm add}\nolimits}
\def\gldim{\mathop{\rm gl.dim}\nolimits}
\def\rad{\mathop{{\rm rad}}\nolimits}
\def\top{\mathop{{\rm top}}\nolimits}
\def\sup{\mathop{\rm sup}\nolimits}
\def\lim{\mathop{\underrightarrow{\rm lim}}\nolimits}
\def\V{\mathop{\rm \mathcal{V}}\nolimits}
\def\T{\mathop{\rm \mathcal{T}}\nolimits}
\def\I{\mathop{\rm \mathcal{I}}\nolimits}
\def\LL{\mathop{\rm LL}\nolimits}
\title{ \bf The derived dimensions and syzygy finite type
\thanks{2020 Mathematics Subject Classification: 18G20, 16E10, 16E35 .}
\thanks{Keywords:
 the bounded derived category, radical layer length, syzygy finite type, derived dimension, artin algebras }}
\author { \ Junling  Zheng\thanks{Email: zhengjunling@cjlu.edu.cn}
\\
{\it \scriptsize  Department of Mathematics, China Jiliang University, Hangzhou, 310018, P. R. China
}}
\date{ }
\begin{document}

\baselineskip=16pt

%%%%%%%%%%%%%%%%%%%%%%%%%%%%%%%%%%%%%%%%%

\maketitle

\begin{abstract}
Let $\Lambda$ be an artin algebra, and $\mathcal{V}$ a subset of all simple modules in $\mod\Lambda$.
Suppose that $\Lambda/\rad \Lambda$ has finite syzygy type, then the
derived dimension of $\Lambda$ is at most $\ell\ell^{t_{\mathcal{V}}}(\Lambda_{\Lambda})+\pd\mathcal{V}.$
In particular,
if the global dimension of $\Lambda$ is finite, then
the derived dimension of $\Lambda$ is at most $\ell\ell^{t_{\mathcal{V}}}(\Lambda_{\Lambda})+\pd\mathcal{V}.$
This generalized the famous result which state that the derived dimension of $\Lambda$ is less than or equal to
the global dimension of $\Lambda$.
\end{abstract}
\pagestyle{myheadings}
\markboth{\rightline {\scriptsize  J. L. Zheng\emph{}}}
         {\leftline{\scriptsize
         The derived dimensions and syzygy finite type}}

%\tableofcontents    % Ŀ¼

\section{Introduction} %delete * to number this section
In \cite{rouquier2006representation,rouquier2008dimensions},
Rouquier introduced the dimension $\tridim \mathcal{T}$ of
 a triangulated category $\mathcal{T}$, 
which can be
used to compute the representation dimension of 
artin algebras(\cite{rouquier2006representation,oppermann2009lower}).
Let $\Lambda$ be an artin algebra and $\mod \Lambda$ the category
of finitely generated right $\Lambda$-modules.
There is a famous result which state that 
the dimension of the bounded derived category $D^{b}(\mod \Lambda)$
is less than or equal to the global dimension of $\Lambda$(see \cite{krause2006rouquier,rouquier2008dimensions}).
In this paper, we will generalize this result.

In \cite{huard2013layer,huard2009finitistic},
 Huard, Lanzilotta and Hern\'andez introduced the notion of the radical layer
length associated with a torsion pair which is a generalization of the Loewy length.
 Let $\Lambda$ be an artin algebra and $\mathcal{V}$
a set of some simple modules in $\mod \Lambda$.
Let $t_{\mathcal{V}}$ be the torsion radical of a
torsion pair associated with $\mathcal{V}$ (see Section 3 for details). We use
$\ell\ell^{t_{\mathcal{V}}}(M)$ to
denote the $t_{\mathcal{V}}$-radical layer length of $M\in\mod\Lambda$.
For a module $M\in\mod \Lambda$, we use $\pd M$ to denote the
 projective dimensions of $M$;
in particular, set $\pd M=-1$ if $M=0$.
 For a subclass $\mathcal{B}$ of $\mod \Lambda$,
  the {\bf projective dimension} $\pd\mathcal{B}$
of $\mathcal{B}$ is defined as
\begin{equation*}
\pd \mathcal{B}=
\begin{cases}
\sup\{\pd M\;|\; M\in \mathcal{B}\}, & \text{if} \;\; \mathcal{B}\neq \varnothing;\\
-1,&\text{if} \;\; \mathcal{B}=\varnothing.
\end{cases}
\end{equation*}

Now, let us list some results about the upper bound of
the dimension of bounded derived categries.
\begin{theorem} \label{thm1.1}
Let $\Lambda$ be an artin algebra and $\mathcal{V}$
a set of some simple modules in $\mod \Lambda$. We have
\begin{enumerate}
\item[$(1)$] {\rm (\cite[Proposition 7.37]{rouquier2008dimensions})}
 $\tridim D^{b}(\mod \Lambda) \leqslant \LL(\Lambda)-1;$
\item[$(2)$] {\rm (\cite[Proposition 7.4]{rouquier2008dimensions}
and \cite[Proposition 2.6]{krause2006rouquier})}
 $\tridim D^{b}(\mod \Lambda) \leqslant \gldim \Lambda;$
\item[$(3)$] {\rm (\cite[Theorem 3.8]{zheng2020upper})}
  $\tridim D^{b}(\mod \Lambda) \leqslant
  (\pd\mathcal{V}+2)(\ell\ell^{t_{\mathcal{V}}}(\Lambda)+1)-2;$
\item[$(4)$] {\rm (\cite{zheng2020thedimension})}
$\tridim D^{b}(\mod \Lambda) \leqslant
2(\pd\mathcal{V}+\ell\ell^{t_{\mathcal{V}}}(\Lambda))+1;$

\item[$(5)$] {\rm (\cite{zheng2021radicalfinite})} if $\ell\ell^{t_{\V}}(\Lambda_{\Lambda})\leqslant 2$,
$\tridim D^{b}(\mod \Lambda) \leqslant
\pd\mathcal{V}+3;$

\item[$(6)$] {\rm (\cite{zheng2021mnIIalgebra})} if $\ell\ell^{t_{\V}}(\Lambda_{\Lambda})\geqslant 2$,
$\tridim D^{b}(\mod \Lambda) \leqslant
2\ell\ell^{t_{\V}}(\Lambda_{\Lambda})+\pd\mathcal{V}-1.$
\end{enumerate}
\end{theorem}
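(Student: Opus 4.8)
The plan is to bound $\tridim D^{b}(\mod\Lambda)$ straight from the definition via generation levels. Recall that for an object $G$ one writes $\langle G\rangle_{1}$ for the class of all direct summands of finite direct sums of shifts $G[i]$, and inductively $\langle G\rangle_{n+1}$ for the summands of objects $Y$ sitting in a triangle $A\to Y\to B\to A[1]$ with $A\in\langle G\rangle_{n}$ and $B\in\langle G\rangle_{1}$; then $\tridim D^{b}(\mod\Lambda)\le d$ as soon as $D^{b}(\mod\Lambda)=\langle G\rangle_{d+1}$ for a single $G$. Since every object of $D^{b}(\mod\Lambda)$ is represented by a bounded complex of modules, the whole problem reduces, for each of the six cases, to producing one generator $G$ together with a uniform recipe that builds an arbitrary bounded complex out of $\langle G\rangle_{1}$ in the stated number of cones. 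The decisive point throughout is to filter the complex so that the number of layers is controlled by a length invariant of $\Lambda$ and does \emph{not} grow with the cohomological width of the complex.

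For (1) I would take $G=\Lambda/\rad\Lambda$ and filter a given bounded complex termwise by its radical series. The associated graded is a bounded complex of semisimple modules, that is, a direct sum of shifted simple modules, which lies in $\langle G\rangle_{1}$; since the radical series of every term has at most $\LL(\Lambda)$ nonzero layers, the complex is the convolution of at most $\LL(\Lambda)$ objects of $\langle G\rangle_{1}$, giving $D^{b}(\mod\Lambda)=\langle G\rangle_{\LL(\Lambda)}$ and hence the bound $\LL(\Lambda)-1$. For (2) the radical filtration is too long, so I would instead take $G=\Lambda$ and assemble the complex from a Cartan--Eilenberg projective resolution, filtering by resolution degree $p$ rather than by cohomological degree $i$: the $p$-th layer is the direct sum over all $i$ of the projective stalks $P^{(i)}_{p}[-i]$ and so lies in $\langle\Lambda\rangle_{1}$, while $0\le p\le\gldim\Lambda$ forces at most $\gldim\Lambda+1$ layers. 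This is exactly the mechanism by which width is traded for syzygy degree, and it yields $\tridim D^{b}(\mod\Lambda)\le\gldim\Lambda$.

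The parts (3)--(6) are the substance, and I would treat them by merging the two mechanisms above through the $t_{\mathcal{V}}$-radical layer length. Here the generator is again built from the simple modules, and one filters a complex termwise by the $t_{\mathcal{V}}$-radical series, whose length is by definition $\ell\ell^{t_{\mathcal{V}}}(\Lambda_{\Lambda})$. Each layer splits into a part supported on simples outside $\mathcal{V}$, which already lies in $\langle G\rangle_{1}$, and a part supported on $\mathcal{V}$, which is not semisimple and must be resolved by projectives; resolving it costs $\pd\mathcal{V}$ extra cones via the Cartan--Eilenberg assembly of (2). Counting $\ell\ell^{t_{\mathcal{V}}}(\Lambda_{\Lambda})$ torsion-radical layers, each carrying a projective resolution of the $\mathcal{V}$-part of length $\pd\mathcal{V}$, produces the bounds: the crude nesting of one resolution inside each layer gives the product estimate $(\pd\mathcal{V}+2)(\ell\ell^{t_{\mathcal{V}}}(\Lambda)+1)-2$ of (3), whereas a more economical interleaving, resolving all layers in parallel along a single syzygy direction, collapses the product to a sum and yields $2(\pd\mathcal{V}+\ell\ell^{t_{\mathcal{V}}}(\Lambda))+1$ of (4). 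The refinements (5) and (6) come from optimizing this interleaving in the low-complexity regimes $\ell\ell^{t_{\mathcal{V}}}(\Lambda_{\Lambda})\le 2$ and $\ell\ell^{t_{\mathcal{V}}}(\Lambda_{\Lambda})\ge 2$ respectively, where the extreme layers can be assembled simultaneously rather than in series.

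I expect the main obstacle to be precisely this interleaving in (3)--(6): the termwise $t_{\mathcal{V}}$-filtration and the projective resolution of the $\mathcal{V}$-part run in two different directions, and while the naive product count is immediate, forcing them to combine \emph{additively} -- so that the cone count is $\pd\mathcal{V}+\ell\ell^{t_{\mathcal{V}}}$ up to small constants rather than their product -- requires a careful diagonal, total-complex argument together with a verification that the objects one peels off genuinely lie in $\langle G\rangle_{1}$ after passing to summands. Controlling the boundary layers under the case split is the delicate point that separates (4) from the sharper (5) and (6).
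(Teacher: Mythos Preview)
The paper does not prove Theorem~1.1: it is a survey statement placed in the introduction, and each of the six items carries an explicit citation to the paper where it is established (Rouquier, Krause--Kussin, and four earlier papers of the author). No argument is supplied here, so there is no ``paper's own proof'' to compare your proposal against.

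Regarding the content of your sketches: your outlines for (1) and (2) are essentially the standard arguments and are correct. One small caution in (2): the $p$-th row of a Cartan--Eilenberg resolution carries a nonzero horizontal differential, so it is not literally the direct sum $\bigoplus_{i}P^{(i)}_{p}[-i]$ of stalks; however, since in a Cartan--Eilenberg resolution the horizontal cycles, boundaries and homology are projective by construction, each row is split and hence does lie in $\langle\Lambda\rangle_{1}$, so your conclusion stands. Your descriptions of (3)--(6), by contrast, are heuristics rather than proofs: the phrases ``crude nesting'', ``more economical interleaving'' and ``optimizing in the low-complexity regimes'' do not name any concrete mechanism, and the cited arguments in fact rely on specific technical tools---notably Wei's derived syzygy complexes $\Omega^{n}_{\mathscr{D}}$ and a careful analysis of which termwise-constructed subcomplexes land in $\mathfrak{F}(\mathcal{V})$, exactly as in the proof of the main theorem of the present paper---that your outline does not supply.
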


\begin{theorem}
Let $\Lambda$ be an artin algebra, and $\mathcal{V}$ a subset of all simple modules in $\mod\Lambda$.
Suppose that $\Lambda/\rad \Lambda$ has finite syzygy type, then
$\tridim D^{b}(\mod \Lambda)\leqslant \ell\ell^{t_{\mathcal{V}}}(\Lambda_{\Lambda})+\pd\mathcal{V}.$
\end{theorem}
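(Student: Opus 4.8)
The plan is to produce a single object generating $D^{b}(\mod \Lambda)$ in at most $n+p$ cones, where $n=\ell\ell^{t_{\mathcal{V}}}(\Lambda_{\Lambda})$ and $p=\pd\mathcal{V}$; here I write $\langle G\rangle_{k}$ for the full subcategory of objects obtained from $G$ by shifts, finite sums, summands and at most $k-1$ cones, so that establishing $D^{b}(\mod\Lambda)=\langle G\rangle_{n+p+1}$ yields $\tridim D^{b}(\mod\Lambda)\le n+p$. Because $\Lambda/\rad\Lambda$ has finite syzygy type, I first fix a module $W$ with $\Omega^{i}(\Lambda/\rad\Lambda)\in\add W$ for all $i\ge 0$, and take the candidate generator to be $G=\Lambda\oplus W$; thus $\proj\Lambda$ and every syzygy of a semisimple module already lie in $\add G\subseteq\langle G\rangle_{1}$. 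The one purely formal device I will use repeatedly is that the projective--resolution triangles give $M\in\langle \proj\Lambda\oplus\Omega^{k}M\rangle_{k+1}$ for every $M$ and every $k\ge 0$; specialized to a semisimple module with $k=1$ this gives $S\in\langle G\rangle_{2}$ uniformly in $S$, and it is exactly this step that the finite syzygy type makes available.

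Next I would reduce everything to a generation statement along the $t_{\mathcal{V}}$-radical filtration. Using that $F_{t_{\mathcal{V}}}$ is a subfunctor of the identity and that $\ell\ell^{t_{\mathcal{V}}}(M)\le n$ for every module $M$ (so that, applied termwise, $F_{t_{\mathcal{V}}}^{\,n}$ kills every bounded complex), each $X\in D^{b}(\mod\Lambda)$ carries a filtration $0=F^{n}X\subseteq\cdots\subseteq FX\subseteq X$ with $n$ subquotient complexes $L_{1},\dots,L_{n}$. From the torsion-theoretic layer machinery of the cited papers I will import the structural description of these layers: each $L_{i}$ is governed by its $t_{\mathcal{V}}$-torsion part, whose composition factors lie in $\mathcal{V}$ and which therefore has projective dimension at most $p$ (projective dimension does not exceed the supremum over composition factors), together with a $t_{\mathcal{V}}$-torsionfree part whose syzygies return into $\add W$. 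Thus the torsion contribution of $X$ can be pushed into a single block of $\langle\proj\Lambda\rangle_{p+1}$, while the torsionfree skeleton is stratified into $n$ pieces each reachable from $W$.

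The technical heart of the proof, and the place where the hypothesis genuinely enters, is the bookkeeping that makes these contributions concatenate to $n+p$ cones rather than multiply. Concretely, the plan is to build $X$ by a single tower of triangles in which the $n$ filtration triangles $F^{i}X\to F^{i-1}X\to L_{i}$ are interleaved with the syzygy triangles of the torsionfree layers, so that (i) the total $t_{\mathcal{V}}$-torsion is attached once, costing $p$ cones through its projective resolution, and (ii) each of the $n$ torsionfree layers is spliced in using a single further cone, its syzygy already lying in $\add W\subseteq\langle G\rangle_{1}$. Verifying that these two towers genuinely concatenate --- that the truncations of $F_{t_{\mathcal{V}}}^{\bullet}$ are compatible with the syzygy triangles into $W$ so that no layer costs a second cone --- is the main obstacle, and is exactly what should improve the coefficient of $\ell\ell^{t_{\mathcal{V}}}(\Lambda_{\Lambda})$ from the $2$ of Theorem~\ref{thm1.1}(6) down to $1$. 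Once the interleaving is in place, summing the two budgets gives $X\in\langle G\rangle_{n+p+1}$, and since $X$ is arbitrary this forces $\tridim D^{b}(\mod\Lambda)\le n+p$; the degenerate case $\mathcal{V}=\varnothing$, where $\pd\mathcal{V}=-1$, is checked directly and is consistent with the stated bound.
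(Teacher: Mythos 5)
There is a genuine gap, and you have in fact flagged it yourself: the ``interleaving of the two towers,'' which you defer as ``the main obstacle,'' is not a bookkeeping detail but the entire content of the theorem, and your proposal contains no mechanism for it. As you set things up, each semisimple layer costs two cones (your own formal device gives only $S\in\langle G\rangle_{2}$, one cone for the projective cover and one for $\Omega^{1}(S)\in\add W$), and each torsionfree layer costs $p+1$ cones through its projective resolution; stacking these along the $n$-step filtration makes the costs multiply, which is exactly how one arrives at the bounds of Theorem \ref{thm1.1}(3) and (6), not at $n+p$. The device the paper uses to make the costs add is to apply Wei's derived syzygy functor $\Omega_{\mathscr{D}}^{p+1}$ to the \emph{whole complex} $X$ at the outset, before any filtration triangle is paid for. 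Since $\Omega_{\mathscr{D}}^{p+1}$ is additive and sends triangles to triangles (Lemmas \ref{lemwei1} and \ref{lemwei3}), it can be pushed through the filtration: it annihilates every $\mathfrak{F}(\mathcal{V})$-layer outright --- both the quotients $q_{t_{\mathcal{V}}}$ at each stage and the bottom complex $F^{n}_{t_{\mathcal{V}}}(X)$ --- by Lemma \ref{lem3.6}, and by finite syzygy type it sends each semisimple layer $\top t_{\mathcal{V}}(\cdot)$ into $\langle V\rangle_{1}$ (Lemma \ref{lem3.5}(2)). This yields $\langle\Omega_{\mathscr{D}}^{p+1}(X)\rangle_{1}\subseteq\langle V\rangle_{n}$ with each layer costing exactly one cone and the projective cost deferred entirely; that cost is then paid once, at the end, via the triangles of Lemma \ref{lemwei2}, giving $X\in\langle\Lambda\rangle_{p+1}\diamond\langle V\rangle_{n}\subseteq\langle\Lambda\oplus V\rangle_{n+p+1}$. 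Your plan never separates the projective budget from the layer budget in this way, so the concatenation you need cannot be verified as stated.

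Two further slips in your setup would also need repair, and both point back to the same missing idea. First, you have the torsion pair backwards: in $(\mathcal{T},\mathfrak{F}(\mathcal{V}))$ it is the torsion\emph{free} class $\mathfrak{F}(\mathcal{V})$ whose composition factors lie in $\mathcal{V}$ and which therefore has projective dimension at most $p$, while the semisimple pieces whose syzygies return to $\add W$ arise as the tops of the torsion radical, via $F_{t_{\mathcal{V}}}=\rad\circ t_{\mathcal{V}}$. Second, your claim that $F^{n}_{t_{\mathcal{V}}}$ applied termwise ``kills every bounded complex'' is false: $F^{n}_{t_{\mathcal{V}}}(X)$ is generally a nonzero complex; what is true is that $t_{\mathcal{V}}(F^{n}_{t_{\mathcal{V}}}(M))=0$ for every module $M$, so the terms of $F^{n}_{t_{\mathcal{V}}}(X)$ lie in $\mathfrak{F}(\mathcal{V})$, and this bottom complex vanishes only after applying $\Omega_{\mathscr{D}}^{p+1}$, in $D^{b}(\mod\Lambda)$, by Lemma \ref{lem3.6}. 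So your filtration does not terminate at $0$; its bottom must be disposed of homologically by the syzygy functor --- which is, again, precisely the step your proposal lacks.
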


    We also give an example to explain
     our results. Sometimes,
     we may be able to get a better upper
     bound for the dimension of
   the bounded derived category of $\Lambda$.

  \section{Preliminaries}
  \subsection{Some notions}
  Let us recall some basic notions in \cite{Wei2017Derived}.
  Let $\mathcal{C}\subseteq \mod\Lambda$. A complex $X$ is a set
  $\{X_{i}\;|\;i\in \mathbb{Z}\}$
  equipped with the following set of
   homomorphisms $$\{d_{i}^{X}: X_{i}\longrightarrow X_{i-1}\;|\;d_{i}^{X}\circ d_{i+1}^{X}=0, i\in \mathbb{Z}\}.$$
 For convinence, we can write $X=\{X_{i}, d_{i}^{X}\}$.
  A complex $X=\{X_{i},d_{i}^{X}\}$ is said to be right(respectively, left) bounded if $X_{i}=0$ for all but finitely
   many negative (respectively, positive) integers $i$.
  A complex $X$ is said to be bounded if $X_{i}=0$ for all but finitely many $i$, and is said to be
  homologically bounded if  $H_{i}(X)=0$ for all but finitely many $i$.

 Let $I\subseteq \mathbb{Z}$. If the homologies $H_{i}(X)=0$(respectively, $X_{i}=0$) for all $i\notin I$,
 then we say the complex $X$ has a homological (respectively, representation) support $I$.
 In particular, we identify a module $M$ with a complex $\{M_{i},d_{i}=0\}$ concentred on the $0$th term, that is,
$M_{0}=M$ and $M_{i}=0$ for all $i\neq 0$.
We denote by $\mathscr{D}^{I}(\mod\Lambda)$ the subcategory of the derived category $\mathscr{D}(\mod\Lambda)$
over $\mod\Lambda$ consisting of complexes with homological support $I$.
We denote $\add M$ by the subcategory of $\mod\Lambda$ consisting of direct summands of finite direct sums of module $M$.
It is well known that
$\mathscr{K}^{-,b}(\add\Lambda)$ is equivalent to $\mathscr{D}^{b}(\mod\Lambda)$ as triangulated categories.
For a complex $X\in \mathscr{D}^{b}(\mod\Lambda)$, a projective resolution of $X$ is a complex $P\in \mathscr{K}^{-,b}(\add\Lambda)$
such that $P\cong X$ in $\mathscr{D}(\mod\Lambda)$. We denote by $\sigma_{I}(X)$
the brutal truncated complex which is obtained from the complex $X$
by replacing each $X_{i}$, where $i\notin I$, with $0$.

\begin{definition}{\rm (\cite[Definition 3.1]{Wei2017Derived})
Let complex $X\in\mathscr{D}^{b}(\mod\Lambda)$ and $n\in \mathbb{Z}$. Let
$P$ be a projective resolution of complex $X$. We say that a complex in $\mathscr{D}^{b}(\mod\Lambda)$
is an $n$th syzygy of $X$, if it is isomorphic to $(\sigma_{[n,+\infty)}(P))[-n]$ in $\mathscr{D}(\mod\Lambda)$.
The $n$th syzygy of $X$ is denoted by $\Omega_{\mathscr{D}}^{n}(X)$.
}
\end{definition}

Let us recall some basic properties on the syzygy of complexes.
\begin{lemma}{\rm (see \cite[Lemma 3.3]{Wei2017Derived})}\label{lemwei1}
Let $M,N\in D^{b}(\mod\Lambda)$ and $n$ be an integer. Then
 $\Omega_{\mathscr{D}}^{n}(M\oplus N)\cong \Omega_{\mathscr{D}}^{n}(M)\oplus \Omega_{\mathscr{D}}^{n}(N).$
\end{lemma}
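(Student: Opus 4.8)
The plan is to reduce the statement to the observation that every ingredient entering the definition of the derived syzygy is compatible with finite direct sums. Concretely, I would first fix projective resolutions $P$ of $M$ and $Q$ of $N$; that is, complexes $P, Q \in \mathscr{K}^{-,b}(\add\Lambda)$ with $P\cong M$ and $Q\cong N$ in $\mathscr{D}(\mod\Lambda)$, whose existence is guaranteed by the equivalence $\mathscr{K}^{-,b}(\add\Lambda)\simeq \mathscr{D}^{b}(\mod\Lambda)$ recalled in the text. Since $\Omega_{\mathscr{D}}^{n}$ is defined up to isomorphism in $\mathscr{D}(\mod\Lambda)$ via any chosen resolution, the freedom to pick convenient resolutions is exactly what makes the argument go through.

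The key point is that $P\oplus Q$ is itself a projective resolution of $M\oplus N$. Each term $(P\oplus Q)_{i}=P_{i}\oplus Q_{i}$ lies in $\add\Lambda$, because $\add\Lambda$ is closed under finite direct sums, so $P\oplus Q\in \mathscr{K}^{-,b}(\add\Lambda)$; and since the direct sum of two isomorphisms in the additive category $\mathscr{D}(\mod\Lambda)$ is again an isomorphism, the isomorphisms $P\cong M$ and $Q\cong N$ combine to $P\oplus Q\cong M\oplus N$. By the definition of the $n$th syzygy we may therefore compute $\Omega_{\mathscr{D}}^{n}(M\oplus N)$ using the resolution $P\oplus Q$.

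It then remains to check that the two remaining operations respect direct sums, which is purely formal. The brutal truncation is defined degreewise by deleting the terms in degrees outside $[n,+\infty)$, so the identity $\sigma_{[n,+\infty)}(P\oplus Q)=\sigma_{[n,+\infty)}(P)\oplus\sigma_{[n,+\infty)}(Q)$ holds on the nose, term by term and differential by differential. Likewise the shift $[-n]$ merely relabels degrees and so commutes with $\oplus$. Stringing these facts together yields
\begin{equation*}
\Omega_{\mathscr{D}}^{n}(M\oplus N)=\bigl(\sigma_{[n,+\infty)}(P\oplus Q)\bigr)[-n]\cong\bigl(\sigma_{[n,+\infty)}(P)\bigr)[-n]\oplus\bigl(\sigma_{[n,+\infty)}(Q)\bigr)[-n]=\Omega_{\mathscr{D}}^{n}(M)\oplus\Omega_{\mathscr{D}}^{n}(N),
\end{equation*}
which is the desired isomorphism.

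No step of this argument is genuinely difficult, so the closest thing to an obstacle is simply making sure that $P\oplus Q$ qualifies as a legitimate projective resolution of $M\oplus N$. This rests on the two closure properties isolated above, namely that $\add\Lambda$ is closed under direct sums and that derived isomorphisms are stable under direct sums; once these are in place, the equality for $\Omega_{\mathscr{D}}^{n}$ follows from the degreewise nature of $\sigma_{[n,+\infty)}$ and $[-n]$ with no further work.
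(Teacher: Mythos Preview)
Your argument is correct and is precisely the natural one: choose resolutions $P$ and $Q$, observe $P\oplus Q$ resolves $M\oplus N$, and use that brutal truncation and shift are degreewise, hence additive. The paper does not actually supply a proof of this lemma; it simply cites \cite[Lemma~3.3]{Wei2017Derived}, so there is nothing further to compare against.
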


\begin{lemma}{\rm (see \cite[Lemma 3.4]{Wei2017Derived})}\label{lemwei2}
Let $M\in D^{b}(\mod\Lambda)$. Then for any integer $n$, there is a triangle
$$\Omega_{\mathscr{D}}^{n+1}(M)\longrightarrow Q \longrightarrow\Omega_{\mathscr{D}}^{n}(M)\longrightarrow(\Omega_{\mathscr{D}}^{n+1}(M))[1] $$
where $Q$ is projective.
\end{lemma}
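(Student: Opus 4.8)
The plan is to realize the desired triangle as a rotation of the distinguished triangle attached to a short exact sequence of brutal truncations of a single projective resolution. Fix a projective resolution $P\in\mathscr{K}^{-,b}(\add\Lambda)$ of $M$, so that by definition $\Omega_{\mathscr{D}}^{n}(M)\cong(\sigma_{[n,+\infty)}(P))[-n]$ and $\Omega_{\mathscr{D}}^{n+1}(M)\cong(\sigma_{[n+1,+\infty)}(P))[-(n+1)]$ in $\mathscr{D}(\mod\Lambda)$. These two brutal truncations differ only in degree $n$: the first keeps $P_{n}$ there, while the second replaces it by $0$. The natural map that is the identity in every degree $\geqslant n+1$ and zero in degree $n$ defines a chain map $f\colon\sigma_{[n,+\infty)}(P)\to\sigma_{[n+1,+\infty)}(P)$; the only square that could fail to commute, at the differential $P_{n+1}\to P_{n}$, does commute because the target of that differential has been killed in the codomain. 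First I would verify that $f$ is surjective with kernel the stalk complex $K$ having $P_{n}$ in degree $n$ and $0$ elsewhere, producing a short exact sequence of complexes
$$0\longrightarrow K\longrightarrow \sigma_{[n,+\infty)}(P)\stackrel{f}{\longrightarrow}\sigma_{[n+1,+\infty)}(P)\longrightarrow 0,$$
which is moreover degreewise split.

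Next I would pass to $\mathscr{D}(\mod\Lambda)$. Since a (degreewise split) short exact sequence of complexes induces a distinguished triangle, I obtain $K\to\sigma_{[n,+\infty)}(P)\to\sigma_{[n+1,+\infty)}(P)\to K[1]$. Applying the shift $[-n]$ and inserting the definitions of the syzygies, I get $K[-n]=Q$ where $Q=P_{n}$ is a stalk complex in degree $0$, hence a finitely generated projective module; $\sigma_{[n,+\infty)}(P)[-n]=\Omega_{\mathscr{D}}^{n}(M)$; and, since $\Omega_{\mathscr{D}}^{n+1}(M)=\sigma_{[n+1,+\infty)}(P)[-(n+1)]$, also $\sigma_{[n+1,+\infty)}(P)[-n]=\Omega_{\mathscr{D}}^{n+1}(M)[1]$. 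Thus the shifted triangle reads
$$Q\longrightarrow\Omega_{\mathscr{D}}^{n}(M)\longrightarrow\Omega_{\mathscr{D}}^{n+1}(M)[1]\longrightarrow Q[1].$$

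Finally I would rotate this triangle one step to the left, turning it into
$$\Omega_{\mathscr{D}}^{n+1}(M)\longrightarrow Q\longrightarrow\Omega_{\mathscr{D}}^{n}(M)\longrightarrow\Omega_{\mathscr{D}}^{n+1}(M)[1],$$
which is exactly the asserted triangle with $Q=P_{n}$ projective. I expect no conceptual obstacle here; the only genuinely delicate points are bookkeeping ones, namely checking that the truncation map must go in the direction $\sigma_{[n,+\infty)}(P)\to\sigma_{[n+1,+\infty)}(P)$ (the opposite inclusion is not a chain map), and tracking the shift convention so that $[-n]$ places the bottom term $P_{n}$ in degree $0$ and carries $\Omega_{\mathscr{D}}^{n+1}(M)$ into the correct rotated position.
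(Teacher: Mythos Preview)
Your argument is correct. The paper itself does not supply a proof of this lemma; it simply cites \cite[Lemma~3.4]{Wei2017Derived}. Your approach---taking a projective resolution $P$, extracting the degreewise split short exact sequence $0\to P_{n}[n]\to\sigma_{[n,+\infty)}(P)\to\sigma_{[n+1,+\infty)}(P)\to 0$, shifting by $[-n]$, and rotating the resulting triangle---is exactly the standard argument behind this fact and is presumably what Wei's proof does as well. There is nothing to correct.
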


\begin{lemma}{\rm (see \cite[Proposition 3.8]{Wei2017Derived})}\label{lemwei3}
Let $L\longrightarrow M \longrightarrow N \longrightarrow L[1]$ be a triangle in $D^{b}(\mod\Lambda).$
Then for each integer $n$, we have a triangle
$$\Omega_{\mathscr{D}}^{n}(L)\longrightarrow \Omega_{\mathscr{D}}^{n}(M) \longrightarrow \Omega_{\mathscr{D}}^{n}(N) \longrightarrow (\Omega_{\mathscr{D}}^{n}(L))[1].$$
\end{lemma}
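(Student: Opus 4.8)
The plan is to prove the statement at the level of projective resolutions, exploiting that the two operations occurring in the definition of $\Omega_{\mathscr{D}}^{n}$, namely the brutal truncation $\sigma_{[n,+\infty)}$ and the shift $[-n]$, both carry short exact sequences, respectively triangles, to triangles. The key point is that every triangle in $\mathscr{D}^{b}(\mod\Lambda)\cong\mathscr{K}^{-,b}(\add\Lambda)$ is isomorphic to one arising from a \emph{termwise split} short exact sequence of projective resolutions, and that $\sigma_{[n,+\infty)}$ preserves this class of sequences.

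First I would choose projective resolutions $P_{L}$ and $P_{M}$ in $\mathscr{K}^{-,b}(\add\Lambda)$ of $L$ and $M$, and lift the morphism $L\to M$ to a chain map $f\colon P_{L}\to P_{M}$. Replacing $P_{M}$ by the mapping cylinder of $f$ (which is homotopy equivalent to $P_{M}$, hence still a projective resolution of $M$), I may assume $f$ is a termwise split monomorphism with cokernel the mapping cone of $f$. Since the cone of a chain-level lift of $L\to M$ is a projective resolution of $N$, call it $P_{N}$, this produces a termwise split short exact sequence of complexes of projectives
$$0\longrightarrow P_{L}\longrightarrow P_{M}\longrightarrow P_{N}\longrightarrow 0$$
whose associated triangle $P_{L}\to P_{M}\to P_{N}\to P_{L}[1]$ is isomorphic to the given triangle $L\to M\to N\to L[1]$.

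Next I would apply $\sigma_{[n,+\infty)}$ termwise. In every degree $i$ the displayed sequence is a split short exact sequence of projectives, and $\sigma_{[n,+\infty)}$ leaves the degree untouched when $i\geqslant n$ and replaces it by $0$ when $i<n$; in either case the degree stays split exact, and the structure maps restrict to chain maps of the truncated complexes because the only differential that is discarded, namely $d_{n}$, now has target $0$. Consequently
$$0\longrightarrow \sigma_{[n,+\infty)}(P_{L})\longrightarrow \sigma_{[n,+\infty)}(P_{M})\longrightarrow \sigma_{[n,+\infty)}(P_{N})\longrightarrow 0$$
is again a termwise split short exact sequence, and therefore induces a triangle in $\mathscr{D}^{b}(\mod\Lambda)$. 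Applying the shift $[-n]$, a triangulated autoequivalence, and reading off the three vertices as $\Omega_{\mathscr{D}}^{n}(L)$, $\Omega_{\mathscr{D}}^{n}(M)$, $\Omega_{\mathscr{D}}^{n}(N)$ straight from the definition, yields the required triangle $\Omega_{\mathscr{D}}^{n}(L)\to \Omega_{\mathscr{D}}^{n}(M)\to \Omega_{\mathscr{D}}^{n}(N)\to (\Omega_{\mathscr{D}}^{n}(L))[1]$.

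I expect the first step to be the main obstacle: turning the abstract triangle into a bona fide termwise split short exact sequence of \emph{projective resolutions} requires the mapping cylinder construction together with the verification that the cone of a chain-level lift of $L\to M$ genuinely resolves $N$, and that all complexes involved remain in $\mathscr{K}^{-,b}(\add\Lambda)$. By contrast, the compatibility of $\sigma_{[n,+\infty)}$ with termwise splittings is a routine degreewise check, and the concluding identification of the vertices is immediate from the definition of the derived syzygy.
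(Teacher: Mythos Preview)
Your proposal is correct. The paper does not supply its own proof of this lemma but simply defers to \cite[Proposition 3.8]{Wei2017Derived}, so there is no in-paper argument to compare against; your route via the mapping cylinder to produce a termwise split short exact sequence of projective resolutions, followed by brutal truncation and shift, is the standard and expected argument.
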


  \subsection{The dimension of triangulated category}
  We recall some notions from
  \cite{rouquier2006representation,rouquier2008dimensions,
  oppermann2009lower}.
 Fix subcategories $\I,\I_{1},\I_{2}$ of a triangulated categories.
  Let $\langle \I \rangle_{1}$ be the smallest full subcategory
  of $\T$ which contains $\I$ and is closed under taking
  finite direct sums, direct summands, and all shifts.
  objects in $\I$.
%  Given two subclasses $\I_{1}, \I_{2}\subseteq {\rm Ob}\T$,
  We denote $\I_{1}*\I_{2}$
  by the full subcategory of all extensions between them, that is,
  $$\I_{1}*\I_{2}=\{ X\mid  X_{1} \longrightarrow X
  \longrightarrow X_{2}\longrightarrow X_{1}[1]\;
  {\rm with}\; X_{1}\in \I_{1}\; {\rm and}\; X_{2}\in \I_{2}\}.$$
Let $\I_{1}\diamond\I_{2}:=\langle\I_{1}*\I_{2} \rangle_{1}$, and one can define
  \begin{align*}
  \langle \I \rangle_{0}:=0,\;
  %\langle \I \rangle_{1}:=\langle \I \rangle\; {\rm and}\;
  \langle \I \rangle_{n+1}:=\langle \I
  \rangle_{n}\diamond\langle \I \rangle_{1}\;{\rm for\; any \;}
  n\geqslant 1.
  \end{align*}
By the octahedral axiom, one can know that $*$ and $\diamond$ are associative.

  \begin{definition}{\rm
    (\cite[Definiton 3.2]{rouquier2006representation})\label{tri.dimenson2.1}
  The {\bf dimension} $\tridim \T$ of a triangulated category $\T$
  is the minimal $d$ such that there exists an object $M\in \T$ with
  $\T=\langle M \rangle_{d+1}$. If no such $M$ exists for any $d$,
  then we set $\tridim \T=\infty.$
  }
  \end{definition}
The dimension $\tridim D^{b}(\mod \Lambda)$ also is said to be the derived dimension of $\Lambda$,
and sometimes is denoted by $\text{der.dim} (\Lambda)$(see \cite{chen2008algebras}).
  \begin{lemma}{\rm (\cite[Lemma 7.3]{psaroudakis2014homological})}\label{lem2.5}
Let $X, Y$ be
   two objects of a triangulated category $\T$.
  Then for each $m,n \geqslant 0$, we have
  $\langle X \rangle _{m}\diamond \langle Y \rangle _{n}
  \subseteq \langle X\oplus Y \rangle _{m+n}.$

  \end{lemma}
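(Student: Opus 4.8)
The plan is to reduce the inclusion to two elementary facts about the operations $\langle-\rangle_{k}$ and $\diamond$: a monotonicity property and a self-composition identity. Throughout I set $Z=X\oplus Y$. The first step is to record monotonicity: if $\I\subseteq\mathcal{J}$ are full subcategories, then $\langle\I\rangle_{k}\subseteq\langle\mathcal{J}\rangle_{k}$ for every $k\geqslant 0$, and if moreover $\I_{1}\subseteq\mathcal{J}_{1}$ and $\I_{2}\subseteq\mathcal{J}_{2}$, then $\I_{1}\diamond\I_{2}\subseteq\mathcal{J}_{1}\diamond\mathcal{J}_{2}$. The statement about $\diamond$ is immediate from the definitions, since a defining triangle witnessing membership in $\I_{1}*\I_{2}$ is also one for $\mathcal{J}_{1}*\mathcal{J}_{2}$, and $\langle-\rangle_{1}$ is order-preserving. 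The statement about $\langle-\rangle_{k}$ then follows by induction on $k$: the base case $\langle\I\rangle_{1}\subseteq\langle\mathcal{J}\rangle_{1}$ holds because $\langle\mathcal{J}\rangle_{1}$ is closed under finite direct sums, summands and shifts and contains $\I$, hence contains the smallest such subcategory $\langle\I\rangle_{1}$; the inductive step rewrites $\langle\I\rangle_{k+1}=\langle\I\rangle_{k}\diamond\langle\I\rangle_{1}$ and invokes monotonicity of $\diamond$. Since $X$ and $Y$ both lie in $\add Z$, this yields $\langle X\rangle_{1}\subseteq\langle Z\rangle_{1}$ and $\langle Y\rangle_{1}\subseteq\langle Z\rangle_{1}$, whence $\langle X\rangle_{m}\subseteq\langle Z\rangle_{m}$ and $\langle Y\rangle_{n}\subseteq\langle Z\rangle_{n}$, and therefore $\langle X\rangle_{m}\diamond\langle Y\rangle_{n}\subseteq\langle Z\rangle_{m}\diamond\langle Z\rangle_{n}$.

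Next I would prove the self-composition identity $\langle Z\rangle_{m}\diamond\langle Z\rangle_{n}=\langle Z\rangle_{m+n}$ by induction on $n$ with $m$ fixed. For the base case $n=0$ I must check that $\langle Z\rangle_{m}\diamond 0=\langle Z\rangle_{m}$: a triangle $W\longrightarrow V\longrightarrow 0\longrightarrow W[1]$ forces $V\cong W$, so $\langle Z\rangle_{m}*0$ is just $\langle Z\rangle_{m}$ up to isomorphism, and since $\langle Z\rangle_{m}$ is closed under sums, summands and shifts (it equals $0$ when $m=0$ and is of the form $\langle-\rangle_{1}$ when $m\geqslant 1$), a further application of $\langle-\rangle_{1}$ changes nothing. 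For the inductive step I would use the associativity of $\diamond$ (recorded in the preliminaries as a consequence of the octahedral axiom) to compute
$$\langle Z\rangle_{m}\diamond\langle Z\rangle_{n+1}=\langle Z\rangle_{m}\diamond(\langle Z\rangle_{n}\diamond\langle Z\rangle_{1})=(\langle Z\rangle_{m}\diamond\langle Z\rangle_{n})\diamond\langle Z\rangle_{1}=\langle Z\rangle_{m+n}\diamond\langle Z\rangle_{1}=\langle Z\rangle_{m+n+1},$$
where the penultimate equality is the inductive hypothesis and the last is the definition of $\langle Z\rangle_{m+n+1}$.

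Combining the two parts gives $\langle X\rangle_{m}\diamond\langle Y\rangle_{n}\subseteq\langle Z\rangle_{m}\diamond\langle Z\rangle_{n}=\langle Z\rangle_{m+n}=\langle X\oplus Y\rangle_{m+n}$, which is exactly the asserted inclusion. The only substantial ingredient is the associativity of $\diamond$, and since that is already granted, the remaining work is the routine bookkeeping of the two inductions. I expect the single point needing care to be the degenerate case $n=0$, where one must correctly invoke the idempotency $\langle\langle-\rangle_{k}\rangle_{1}=\langle-\rangle_{k}$ to conclude that composing with the zero subcategory has no effect.
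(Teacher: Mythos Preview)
Your argument is correct. The paper does not supply its own proof of this lemma: it is quoted verbatim as \cite[Lemma 7.3]{psaroudakis2014homological} and used as a black box, so there is no in-paper proof to compare against. What you have written is the standard argument --- monotonicity of $\langle-\rangle_{k}$ and $\diamond$, followed by the identity $\langle Z\rangle_{m}\diamond\langle Z\rangle_{n}=\langle Z\rangle_{m+n}$ obtained from associativity of $\diamond$ by induction --- and it goes through without issue; your caution about the $n=0$ base case is well placed and handled correctly, since $\langle Z\rangle_{m}$ for $m\geqslant 1$ is by construction of the form $\langle-\rangle_{1}$ and hence idempotent under $\langle-\rangle_{1}$.
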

\section{Main results}
\subsection{An answer of Wei's problem}\label{weiproblem}
In \cite{Wei2017Derived}, Wei give the following

$\mathbf{Problem}$ Is every syzygy-finite algebra derived to an algebra of finite representation type?
\\
The answer is negative.
Let $\Lambda$ be the Beilinson algebra $kQ/I$ with (see \cite[Example 3.7]{oppermann2010representation1})
$$\xymatrix{
&0 \ar@/_1pc/[r]_{x_{n}}\ar@/^1pc/[r]^{x_{0}}_{\vdots}
&1\ar@/_1pc/[r]_{x_{n}}\ar@/^1pc/[r]^{x_{0}}_{\vdots}
&2\ar@/_1pc/[r]_{x_{n}}\ar@/^1pc/[r]^{x_{0}}_{\vdots}
&3&\cdots &n-1\ar@/_1pc/[r]_{x_{n}}\ar@/^1pc/[r]^{x_{0}}_{\vdots}&n
}$$
$I=(x_{i}x_{j}-x_{j}x_{i})$(where $0 \leqslant i, j \leqslant n$ and $n\geqslant 2$).
We know that $\tridim D^{b}(\mod \Lambda)=\gldim \Lambda =n $.
Since $\gldim \Lambda =n $ is finite, we know that $\Lambda$
is syzygy-finite algebra.
If there exists a finite representation algebra $A$ such that $D^{b}(\mod A)$ is equivalent to $D^{b}(\mod\Lambda)$, we know that
$\tridim D^{b}(\mod A)=\tridim D^{b}(\mod\Lambda)\geqslant 2$ by \cite[Lemma 3.4]{rouquier2008dimensions}.
On the other hand, the derived dimension of every finite representation algebra is less or equal to $1$(see \cite{han2009derived}),
then we know that $\tridim D^{b}(\mod A)\leqslant 1$, contradiction!

\subsection{The derived dimension of algebra $\Lambda$ with $\top \Lambda$ finite syzygy type}\label{mainresult}
For a module $M\in \mod\Lambda$, we use $\rad M$ and $\top M$
to denote the radical and top of $M$ respectively.
We use $\add M$ to denote the subcategory of $\mod\Lambda$
consisting of direct summands of finite direct sums of module $M$.
Let $\mathcal{V}$ be a subset of all simple modules, and $\mathcal{V}'$
the set of all the others simple modules in $\mod\Lambda$.
We write $\mathfrak{F}(\mathcal{V}):=\{M\in\mod\Lambda\;|\;\text{ there exists a chain }
0\subseteq M_{0}\subseteq M_{1}\subseteq M_{2}\subseteq \cdots\subseteq M_{m-1}\subseteq M_{m}=M
\text{ of submodules of  } M \text{ such that each quotients } M_{i}/M_{i-1}\in\mathcal{V}\}.$
$\mathfrak{F}(\mathcal{V})$ is closed under extensions, submodules and quotients modules. Then we have
a torsion part $(\mathcal{T},\mathfrak{F}(\mathcal{V}))$, and the corresponding torsion radical is
denoted by $t_{\mathcal{V}}$, and we set $q_{t_{\mathcal{V}}}(M)=M/t_{\mathcal{V}}(M)$ for each $M\in \mod\Lambda$.
Note that, $\rad,\top,t_{\mathcal{V}}$ and $q_{t_{\mathcal{V}}}$ are covariant additive functors.

\begin{remark}\label{rem-gldim}
{\rm (see \cite[Remark 3.16(2)]{zheng2020upper})
 If $\mathcal{V}$ is the set of all simple moduels, then the torsion pair $(\mathcal{T}_{\mathcal{V}}, \mathfrak{F}(\mathcal{V}))=(0,\mod \Lambda)$, and
$\ell\ell^{t_{\mathcal{V}}}(\Lambda)=0$ and $\pd \mathcal{V}=\gldim \Lambda$.
In this case, $\pd \mathcal{V}+\ell\ell^{t_{\mathcal{V}}}(\Lambda)=\gldim \Lambda$.}
\end{remark}

\begin{definition}{\rm (\cite{huard2013layer})
The $t_{\mathcal{V}}$-radical layer length is a function
$\ell\ell^{t_{\mathcal{V}}}:\;\;\mod\Lambda \longrightarrow \mathbb{N}\cup \{\infty\}$
 via $$\ell\ell^{t_{\mathcal{V}}}(M)=\inf\{i\geqslant 0\;|\;t_{\mathcal{V}}\circ F_{t_{\mathcal{V}}}^{i}(M)=0, M\in \mod\Lambda\}$$
 where $F_{t_{\mathcal{V}}}=\rad\circ t_{\mathcal{V}}. $
 }
\end{definition}

%\begin{lemma}{\rm (\cite[Lemma 3.3]{zheng2020upper})}\label{lem3.3}
%\begin{enumerate}
%\item[(1)]
%$\mathfrak{F}(\mathcal{V})$ is closed under extensions, submodules and quotient modules.
%\item[(2)] The functor $t_{\V}$ preserves monomorphisms and epimorphisms.
%\end{enumerate}
%\end{lemma}
Let $M\in\mod\Lambda$. Let $f:P\longrightarrow M$ be the
projective cover of $M$, and we set $\Omega^{1}(M):=\Ker f$.
Inductively, $\Omega^{i}(M):=\Omega^{1}(\Omega^{i-1}(M))$
for each $i\geqslant 1$, where $\Omega^{0}(M)=M$ and $\Omega(M):=\Omega^{1}(M).$
\begin{definition}\label{syzygy-type}
{\rm (\cite{Goodearl1998repetitive})
  Let $M\in \mod \Lambda$.
  We say that $M$ has finite syzygy type if
  there is a module $V\in \mod \Lambda$ such that,
  for each $i\geqslant 0$, $\Omega^{i}(M)\in \add V.$
  }
\end{definition}

\begin{lemma}\label{lem3.5}
Let $X$ be a bounded complex and $X\in D^{b}(\mod \Lambda)$
and all $X_{i}$ semisimple.
%\[\xymatrix{
%X:\ \cdots   \ar[rr]^{d^{i+1}} & & X_{i+1}\ar[rr]^{d_{i+1}}    & & X_{i}\ar[rr]^{d_{i}}
%& & X_{i-1}\ar[rr]^{d_{i-1}}   &&\cdots   & }\]
%be a bounded complex in $\mod \Lambda$ with all $X_{i}$ seimisimple.
Then

$(1)$ $X\cong \oplus_{i}H_{i}(X)[i]$ and $X\in \langle \Lambda/\rad\Lambda \rangle_{1}$
in $D^{b}(\mod \Lambda)$.

$(2)$ if $\Lambda/\rad \Lambda$ has finite syzygy type, then there is a module $V$, such that
for each integer $m$, we have
$\langle \Omega^{m}_{\mathscr{D}}(X) \rangle_{1}\subseteq\langle  V\rangle_{1}.$
\end{lemma}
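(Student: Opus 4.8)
The plan is to prove $(1)$ via the standard fact that a bounded complex over the semisimple category $\add(\Lambda/\rad\Lambda)$ splits as the direct sum of its homologies, and then to combine this decomposition with the additivity of the derived syzygy (Lemma \ref{lemwei1}) to reduce $(2)$ to a single computation on shifted semisimple stalk complexes.

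First I would prove $(1)$. Since every $X_i$ is semisimple, every submodule of $X_i$ is a direct summand; in particular each $\im d^X_{i+1}$ and each $\ker d^X_i$ split off $X_i$. Choosing such splittings gives a decomposition $X_i \cong \im d^X_{i+1} \oplus H_i(X) \oplus B_i$ in which $d^X_i$ restricts to an isomorphism $B_i \xrightarrow{\sim} \im d^X_i$. This exhibits $X$ as the direct sum of the complex $\bigoplus_i H_i(X)[i]$ with zero differentials and a family of contractible two-term complexes $\bigl(B_i \xrightarrow{\sim} \im d^X_i\bigr)$, whence $X \cong \bigoplus_i H_i(X)[i]$ in $D^{b}(\mod\Lambda)$. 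Each $H_i(X)$ is a subquotient of the semisimple module $X_i$, hence semisimple, so $H_i(X) \in \add(\Lambda/\rad\Lambda)$ and $H_i(X)[i] \in \langle \Lambda/\rad\Lambda \rangle_1$; as $X$ is bounded the sum is finite, giving $X \in \langle \Lambda/\rad\Lambda \rangle_1$.

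For $(2)$ the decisive step is to identify $\Omega_{\mathscr{D}}^{m}$ of a shifted semisimple stalk. Given a semisimple module $S$ with projective resolution $\cdots \to P_1 \to P_0 \to 0$, the complex $S[i]$ is resolved by this resolution shifted so that $P_0$ sits in degree $i$. Unwinding the brutal truncation $\sigma_{[m,+\infty)}$ and the shift $[-m]$ from the definition of $\Omega_{\mathscr{D}}^{m}$, I expect to obtain
$$\Omega_{\mathscr{D}}^{m}(S[i]) \cong \begin{cases} S[i-m], & m \leqslant i,\\ \Omega^{m-i}(S), & m > i, \end{cases}$$
where $\Omega^{m-i}(S)$ is the ordinary module syzygy regarded as a stalk in degree $0$. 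This is the step I expect to be the main obstacle: one must keep careful track of the homological indexing under truncation and shift, and verify that the truncated resolution has homology concentrated in a single degree equal to the claimed syzygy (the two cases meeting consistently at $m=i$).

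Granting this, I would conclude as follows. By Lemma \ref{lemwei1} and part $(1)$, $\Omega_{\mathscr{D}}^{m}(X) \cong \bigoplus_i \Omega_{\mathscr{D}}^{m}(H_i(X)[i])$, a finite sum. The hypothesis that $\Lambda/\rad\Lambda$ has finite syzygy type yields a module $V$ with $\Omega^{j}(\Lambda/\rad\Lambda) \in \add V$ for all $j \geqslant 0$; taking $j=0$ shows $\Lambda/\rad\Lambda \in \add V$, so every semisimple module lies in $\add V$. When $m \leqslant i$ the summand $\Omega_{\mathscr{D}}^{m}(H_i(X)[i]) \cong H_i(X)[i-m]$ is a shift of a semisimple module, hence in $\langle V \rangle_1$; when $m > i$ it equals $\Omega^{m-i}(H_i(X))$, and since $H_i(X) \in \add(\Lambda/\rad\Lambda)$ and the syzygy functor carries summands to summands, $\Omega^{m-i}(H_i(X)) \in \add\bigl(\Omega^{m-i}(\Lambda/\rad\Lambda)\bigr) \subseteq \add V \subseteq \langle V \rangle_1$. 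Thus $\Omega_{\mathscr{D}}^{m}(X)$ is a finite direct sum of objects of $\langle V \rangle_1$, so $\Omega_{\mathscr{D}}^{m}(X) \in \langle V \rangle_1$, and therefore $\langle \Omega_{\mathscr{D}}^{m}(X) \rangle_1 \subseteq \langle V \rangle_1$ because $\langle V \rangle_1$ is closed under shifts, finite direct sums and direct summands.
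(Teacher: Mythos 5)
Your proposal is correct and follows essentially the same route as the paper: decompose $X\cong\oplus_i H_i(X)[i]$ (which the paper cites from an earlier work rather than reproving), apply the additivity of $\Omega_{\mathscr{D}}^{m}$ from Lemma \ref{lemwei1} to reduce to shifted semisimple stalks, and absorb these into $\add V$ using finite syzygy type. The only difference is that you make explicit the two-case computation of $\Omega_{\mathscr{D}}^{m}(S[i])$ (a shift of $S$ when $m\leqslant i$, the ordinary syzygy $\Omega^{m-i}(S)$ when $m>i$), which the paper's displayed chain of inclusions leaves implicit; this is a worthwhile verification, not a deviation.
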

\begin{proof} $(1)$ See \cite[Lemma 3.5]{zheng2020upper}.

$(2)$
By assumption and Definition \ref{syzygy-type},
  there is a module $V\in \mod \Lambda$ such that,
  for each $i\geqslant 0$, $\Omega^{i}(\Lambda/\rad \Lambda)\in \add V.$
  Note that, for each integer $i$, we know that
  $H_{i}(X)$ is semisimple,
\begin{align*}
   \langle \Omega^{m}_{\mathscr{D}}(X) \rangle_{1}&=\langle\Omega^{m}_{\mathscr{D}}(\oplus_{i}H_{i}(X)[i])  \rangle_{1}\;\;\;\;\text{(by (1))}\\
   &=\langle\oplus_{i}\Omega^{m}_{\mathscr{D}}(H_{i}(X)[i])  \rangle_{1}\;\;\;\;\text{(by Lemma \ref{lemwei1})}\\
    &\subseteq\langle\oplus_{i}\Omega^{m}_{\mathscr{D}}((\Lambda/\rad\Lambda)[i])  \rangle_{1}\;\;\;\;(H_{i}(X)\text{ is semisimple})\\
                &\subseteq\langle V \rangle_{1}.
    \end{align*}
\end{proof}
\begin{lemma}{\rm (\cite[Lemma 3.7]{zheng2020upper})}\label{lem-finite-step}
Let $\mathcal{V}$ be a subset of $\mathcal{S}^{<\infty}$. For a bounded complex $X\in D^{b}(\mod \Lambda)$.
If $\ell\ell^{t_{\mathcal{V}}}(\Lambda_{\Lambda})=n$, then $F^{n}_{t_{\mathcal{V}}}(X)\in\langle\Lambda\rangle_{\pd \mathcal{V}+1} $.
\end{lemma}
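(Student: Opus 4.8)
The plan is to split the statement into a termwise projective-dimension estimate and a generation estimate, and then to run Wei's derived syzygy calculus (Lemmas \ref{lemwei1}--\ref{lemwei3}) on the resulting complex. Throughout write $p=\pd\mathcal{V}$ and $Y=F^{n}_{t_{\mathcal{V}}}(X)$.

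First I would work degree by degree. Since $F_{t_{\mathcal{V}}}=\rad\circ t_{\mathcal{V}}$ is an additive functor applied to a complex termwise, $Y$ is the complex with terms $F^{n}_{t_{\mathcal{V}}}(X_{i})$. The key input is the monotonicity of the radical layer length relative to the regular module: every module is a quotient of a free module and $F_{t_{\mathcal{V}}}$ respects such quotients, so $\ell\ell^{t_{\mathcal{V}}}(N)\leqslant\ell\ell^{t_{\mathcal{V}}}(\Lambda_{\Lambda})=n$ for every $N\in\mod\Lambda$. Applying this with $N=X_{i}$ gives $t_{\mathcal{V}}\circ F^{n}_{t_{\mathcal{V}}}(X_{i})=0$, i.e. $F^{n}_{t_{\mathcal{V}}}(X_{i})\in\mathfrak{F}(\mathcal{V})$. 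Because $\mathcal{V}\subseteq\mathcal{S}^{<\infty}$ and every object of $\mathfrak{F}(\mathcal{V})$ carries a finite filtration with factors in $\mathcal{V}$, each term of $Y$ has projective dimension at most $p$.

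It then remains to show that a bounded complex $Y$ whose terms all have projective dimension at most $p$ lies in $\langle\Lambda\rangle_{p+1}$. Here I would pass to a projective resolution $P\in\mathscr{K}^{-,b}(\add\Lambda)$ of $Y$. Because each term has finite projective dimension $\leqslant p$, the goal is to prove that the class of the brutal truncation $\sigma_{[p,+\infty)}(P)$, namely $\Omega^{p}_{\mathscr{D}}(Y)$, already lies in $\langle\Lambda\rangle_{1}$ (a sum of shifts of projectives). Granting this, I feed it into the triangles of Lemma \ref{lemwei2} and iterate $p$ times, using Lemma \ref{lem2.5} to add up the generation degrees, to obtain
$$Y\in\langle\Lambda\rangle_{p}\diamond\langle\Omega^{p}_{\mathscr{D}}(Y)\rangle_{1}\subseteq\langle\Lambda\rangle_{p}\diamond\langle\Lambda\rangle_{1}=\langle\Lambda\rangle_{p+1},$$
which is exactly the assertion. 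To organize the reduction I would use Lemma \ref{lemwei1} together with the fact that $\mathfrak{F}(\mathcal{V})$ is closed under subobjects and quotients, so each homology $H_{i}(Y)\in\mathfrak{F}(\mathcal{V})$ again has projective dimension $\leqslant p$ and, living in a single degree, lies in $\langle\Lambda\rangle_{p+1}$ by itself.

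The main obstacle is making the generation degree equal to $p+1$ rather than letting it grow with the number of nonzero terms of $Y$. A naive reassembly of $Y$ from its brutal truncations via Lemma \ref{lemwei3} spends one additional step for each degree in which $Y$ is supported, which would give a bound of the form $p+\,(\text{length of }Y)$. To beat this I must extract the sharp vanishing $\Omega^{p}_{\mathscr{D}}(Y)\in\langle\Lambda\rangle_{1}$ from the finiteness of the projective dimension of the terms and control how $Y$ is rebuilt from its syzygies so that the total cost collapses to $p+1$; concretely this is a decomposition/formality phenomenon for complexes over the subcategory $\mathfrak{F}(\mathcal{V})$, whose objects all have projective dimension bounded by $\pd\mathcal{V}$. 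This is the step I expect to require the most care.
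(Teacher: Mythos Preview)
The paper does not prove this lemma---it is quoted from \cite{zheng2020upper}---but its companion Lemmas~\ref{lem-syzygy-zero} and~\ref{lem3.6} make the intended argument transparent, and against that benchmark your proposal has a genuine gap at exactly the point you flag as ``requiring the most care.''

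Your first reduction is correct: each term $Y_i=F^{n}_{t_{\mathcal{V}}}(X_i)$ lies in $\mathfrak{F}(\mathcal{V})$, hence has $\pd Y_i\leqslant p$. You also correctly identify the target, namely $\Omega^{p+1}_{\mathscr{D}}(Y)=0$ (equivalently $\Omega^{p}_{\mathscr{D}}(Y)\in\langle\Lambda\rangle_1$), after which the iterated triangles of Lemma~\ref{lemwei2} finish the job. The gap is that you then reach for a ``decomposition/formality phenomenon,'' which is neither available nor needed. The missing observation is one you already made but applied to the wrong object: closure of $\mathfrak{F}(\mathcal{V})$ under submodules gives $\Ker d_i^{Y}\in\mathfrak{F}(\mathcal{V})$, so $\pd\Ker d_i^{Y}\leqslant p$ as well. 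With both $\pd Y_i\leqslant p$ and $\pd\Ker d_i^{Y}\leqslant p$ in hand, a standard iterated-horseshoe (Cartan--Eilenberg type) construction produces a projective resolution $P\in\mathscr{K}^{b}(\add\Lambda)$ of $Y$; this is precisely the content of Lemma~\ref{lem-syzygy-zero} (after a harmless shift into $\mathscr{D}^{(-\infty,-1]}$), and it yields $\Omega^{p+1}_{\mathscr{D}}(Y)=0$ on the nose. No formality, no splitting of $Y$ into its homologies, and no dependence on the length of the support of $Y$ is involved---the uniform bound on the projective dimension of terms \emph{and} kernels is what collapses the cost to $p+1$. Once you route the closure-under-submodules fact through $\Ker d_i$ rather than $H_i(Y)$ and invoke Lemma~\ref{lem-syzygy-zero}, your outline becomes a complete proof matching the approach of \cite{zheng2020upper}.
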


It is well known that $\mathscr{D}^{b}(\mod\Lambda)$ is equivalent to $\mathscr{K}^{-,b}(\add \Lambda)$ as
triangulated categories. A projective resolution of complex $M$ is a complex $P\in\mathscr{K}^{-,b}(\add \Lambda)$
such that $P\cong M$ in $\mathscr{D}(\mod \Lambda)$.
We need the following observation in this paper.
\begin{lemma}\label{lem-syzygy-zero}
If the following bounded complex
\[\xymatrix{
X: \cdots   \ar[rr]^{d_{i+2}} & & X_{i+1}\ar[rr]^{d_{i+1}}    & & X_{i}\ar[rr]^{d_{i}}
& & X_{i-1}\ar[rr]^{d_{i-1}}   &&\cdots   &}\]
with all $X_{i}$ in $\mod \Lambda$ satisfied the following two conditions

$(1)$ $X\in \mathscr{D}^{(-\infty,-1]}(\mod \Lambda);$

$(2)$ there exists an integer $m$ such that $\pd X_{i}\leqslant m<\infty$ and $\pd \Ker d_{i}\leqslant m<\infty$
for each $i$.\\
Then there is a complex $P\in\mathscr{K}^{b}(\add \Lambda)$ such that
$P\cong X$ in $\mathscr{D}(\mod \Lambda)$.
In particular,
$\Omega_{\mathscr{D}}^{m+1}(X)=0$ in $\mathscr{D}(\mod\Lambda)$.
\end{lemma}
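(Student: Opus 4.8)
The plan is to prove the two assertions separately: first I would show that $X$ is perfect (isomorphic in $\mathscr{D}(\mod\Lambda)$ to a bounded complex of projectives), and then extract the vanishing of the high syzygy by a degree count on such a resolution. For the reduction, note that condition $(1)$ says $H_{i}(X)=0$ for all $i\geqslant 0$, so the good truncation $\tau_{\geqslant 0}(X)$ is acyclic; the canonical truncation triangle $\tau_{\leqslant -1}(X)\to X\to \tau_{\geqslant 0}(X)\to \tau_{\leqslant -1}(X)[1]$ then yields $X\cong \tau_{\leqslant -1}(X)$ in $\mathscr{D}(\mod\Lambda)$. Thus I may replace $X$ by $\tau_{\leqslant -1}(X)$, a complex concentrated in homological degrees $\leqslant -1$ whose terms in degrees $\leqslant -2$ are the original $X_{i}$ and whose top term (degree $-1$) is $\Coker d_{0}$.

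For perfectness, each interior term has $\pd X_{i}\leqslant m$ by $(2)$, while for the top term the short exact sequences $0\to \Ker d_{0}\to X_{0}\to \im d_{0}\to 0$ and $0\to \im d_{0}\to X_{-1}\to \Coker d_{0}\to 0$, combined with the bounds $\pd X_{0},\pd X_{-1},\pd\Ker d_{0}\leqslant m$ from $(2)$, give $\pd\Coker d_{0}<\infty$. Hence $\tau_{\leqslant -1}(X)$ is a bounded complex all of whose terms have finite projective dimension, and a Cartan--Eilenberg resolution of it produces $P\in \mathscr{K}^{b}(\add\Lambda)$ with $P\cong X$ in $\mathscr{D}(\mod\Lambda)$, using $\mathscr{K}^{-,b}(\add\Lambda)\simeq\mathscr{D}^{b}(\mod\Lambda)$. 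This settles the first assertion.

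For the syzygy statement, fix a projective resolution $P$ of $X$. Since $H_{i}(P)=H_{i}(X)=0$ for $i\geqslant 0$, the complex $P$ is exact in nonnegative degrees, so $\sigma_{[0,+\infty)}(P)$ is an honest projective resolution of the module $M_{0}:=\Omega_{\mathscr{D}}^{0}(X)$, and its brutal truncations compute the ordinary module syzygies of $M_{0}$. Using Lemma \ref{lemwei2} inductively, I would check that for every $n\geqslant 0$ the object $\Omega_{\mathscr{D}}^{n}(X)$ is isomorphic in $\mathscr{D}(\mod\Lambda)$ to $\Omega^{n}(M_{0})$. Consequently the goal $\Omega_{\mathscr{D}}^{m+1}(X)=0$ reduces to the single inequality $\pd M_{0}\leqslant m$; equivalently, one shows the minimal resolution $P$ is supported in homological degrees $\leqslant m$, whence $\sigma_{[m+1,+\infty)}(P)=0$.

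The hard part will be exactly this projective-dimension bookkeeping for the top truncated term $\Coker d_{0}$, equivalently for $M_{0}$. A crude application of the two short exact sequences above inflates the bound, so the estimate must exploit the exactness of $X$ in nonnegative degrees together with the hypothesis that \emph{every} kernel $\Ker d_{i}$—not only the terms $X_{i}$—has projective dimension at most $m$; this is what keeps the successive $X$-syzygies $\Ker d_{i}$ controlled and prevents the resolution from spilling past degree $m$. I expect pinning down this bound, and hence the precise homological degree at which the resolution terminates, to be the step requiring the most care; once it is in place, the identification $\Omega_{\mathscr{D}}^{n}(X)\cong\Omega^{n}(M_{0})$ converts it directly into $\Omega_{\mathscr{D}}^{m+1}(X)=0$.
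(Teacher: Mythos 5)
Your first assertion (perfectness of $X$) is argued correctly: the good truncation reduces to a bounded complex whose terms all have finite projective dimension, and a Cartan--Eilenberg resolution gives $P\in\mathscr{K}^{b}(\add\Lambda)$. But for the second assertion your proposal stops exactly where the lemma lives. You correctly reduce everything to the inequality $\pd M_{0}\leqslant m$ for the zeroth syzygy module $M_{0}\cong \im d_{0}^{P}$, and then you explicitly defer it (``the hard part will be exactly this projective-dimension bookkeeping''), expressing the hope that exactness in nonnegative degrees plus the kernel bounds will prevent the resolution from spilling past degree $m$. That hope cannot be realized: from conditions $(1)$ and $(2)$ alone one only gets $\pd \im d_{0}\leqslant \max\{\pd X_{0},\pd\Ker d_{0}+1\}\leqslant m+1$, hence $\pd M_{0}\leqslant m+1$ and $\Omega_{\mathscr{D}}^{m+2}(X)=0$, and this loss of one degree is genuine. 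Counterexample: let $\Lambda=kQ/I$ with $Q:1\xrightarrow{\ \alpha\ }2\xrightarrow{\ \beta\ }3$ and $I$ generated by the length-two path, let $S_{i}$, $P_{i}$ denote the simples and indecomposable projectives, and let $X$ be the minimal projective resolution $0\to P_{3}\to P_{2}\to P_{1}\to 0$ of $S_{1}$ placed in degrees $1,0,-1$, so that $X\cong S_{1}[-1]\in\mathscr{D}^{(-\infty,-1]}(\mod\Lambda)$. Take $m=0$: every term is projective and every kernel ($0$, $P_{3}$, $P_{1}$) is projective, so $(1)$ and $(2)$ hold; yet $M_{0}\cong \im(P_{2}\to P_{1})\cong S_{2}$ has $\pd S_{2}=1=m+1$, and $\Omega_{\mathscr{D}}^{1}(X)\cong \Omega^{2}(S_{1})=P_{3}\neq 0$ for \emph{every} choice of projective resolution (Schanuel's lemma only perturbs the truncation by projective stalks, never makes it zero). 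So the step you flagged as hard is in fact false under the stated hypotheses, and no bookkeeping will close the gap.

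For calibration: the paper itself offers no proof of this lemma (it is stated as an ``observation'' abstracted from \cite[Lemma 3.6]{zheng2020upper}), and the missing ingredient is visible in how it is invoked: the proof of Lemma \ref{lem3.6} records not only $\pd\Ker d_{i}\leqslant\pd\mathcal{V}$ but also $\pd H_{i}(Y)\leqslant\pd\mathcal{V}$, which is automatic there because $\mathfrak{F}(\mathcal{V})$ is closed under subquotients. Once the extra hypothesis $\pd H_{i}(X)\leqslant m$ is added, the conclusion does hold and the proof is short: replace $X$ by $\tau_{\leqslant -1}X$, whose degree-$(-1)$ term is $\Coker d_{0}$; the exact sequences $0\to \Ker d_{-1}\to X_{-1}\to \im d_{-1}\to 0$ and $0\to H_{-1}(X)\to \Coker d_{0}\to \im d_{-1}\to 0$ give $\pd\Coker d_{0}\leqslant m+1$, while the terms in degrees $i\leqslant -2$ have $\pd\leqslant m$, so splicing resolutions of the terms yields a projective resolution concentrated in degrees $\leqslant\max\{-1+(m+1),\,-2+m\}=m$, whence $\sigma_{[m+1,+\infty)}(P)=0$ and $\Omega_{\mathscr{D}}^{m+1}(X)=0$ (alternatively, filter $X$ by the homology stalks $H_{i}(X)[i]$, $i\leqslant -1$, and apply Lemmas \ref{lemwei1} and \ref{lemwei3}). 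Note where this differs from your plan: the decisive bound is carried by the homologies, not by the kernels, and it exploits the shift by $-1$ (a stalk in degree $-1$ of projective dimension $m+1$ still resolves within degrees $\leqslant m$); your intended bound $\pd M_{0}\leqslant m$ is strictly stronger than what is true or needed.
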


\begin{lemma}\label{lem3.6}
Let $\mathcal{V}$ be a subset of $\mathcal{S}^{<\infty}$. Given the following bounded complex
 $Y\in \mathscr{D}^{(-\infty,-1]}(\mod \Lambda)$
\[\xymatrix{
Y: \cdots   \ar[rr]^{d_{i+2}} & & Y_{i+1}\ar[rr]^{d_{i+1}}    & & Y_{i}\ar[rr]^{d_{i}}
& & Y_{i-1}\ar[rr]^{d_{i-1}}   &&\cdots   &}\]
with all $Y_{i}\in \mathfrak{F}(\mathcal{V})$, we can get
 $\Omega^{\pd\mathcal{V}+1}_{\mathscr{D}}(Y)=0$ in $D^{b}(\mod\Lambda)$.
 In particular,  for each bounded complex $X\in \mathscr{D}^{(-\infty,-1]}(\mod \Lambda)$,
we have that $\Omega_{\mathscr{D}}^{\pd\mathcal{V}+1}(q_{t_{\mathcal{V}}}(X))=0$ in $D^{b}(\mod\Lambda)$.
\end{lemma}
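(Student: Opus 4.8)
The plan is to deduce both assertions from the syzygy–vanishing criterion of Lemma \ref{lem-syzygy-zero}, applied with the uniform projective–dimension bound $m=\pd\mathcal{V}$. Note first that $\pd\mathcal{V}<\infty$: since $\mathcal{V}\subseteq\mathcal{S}^{<\infty}$ consists of simple modules of finite projective dimension and there are only finitely many simples up to isomorphism, $\pd\mathcal{V}$ is a finite maximum, so the hypothesis ``$m<\infty$'' of Lemma \ref{lem-syzygy-zero}(2) will be available.

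For the general (first) statement I would apply Lemma \ref{lem-syzygy-zero} to $Y$ with $m=\pd\mathcal{V}$. Condition (1) there holds verbatim, since $Y\in\mathscr{D}^{(-\infty,-1]}(\mod\Lambda)$ is part of the hypothesis. For condition (2): each term $Y_i\in\mathfrak{F}(\mathcal{V})$ admits a filtration with successive quotients in $\mathcal{V}$, so by the standard subadditivity of $\pd$ along short exact sequences one gets $\pd Y_i\leqslant\pd\mathcal{V}$; moreover $\Ker d_i$ is a submodule of $Y_i$, and since $\mathfrak{F}(\mathcal{V})$ is closed under submodules we have $\Ker d_i\in\mathfrak{F}(\mathcal{V})$, hence $\pd\Ker d_i\leqslant\pd\mathcal{V}$ as well. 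Lemma \ref{lem-syzygy-zero} then gives $\Omega^{\pd\mathcal{V}+1}_{\mathscr{D}}(Y)=0$ in $D^{b}(\mod\Lambda)$.

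For the ``in particular'' clause I would set $Y=q_{t_{\mathcal{V}}}(X)$, the complex obtained by applying the additive functor $q_{t_{\mathcal{V}}}$ to $X$ term by term. Each term $q_{t_{\mathcal{V}}}(X_i)=X_i/t_{\mathcal{V}}(X_i)$ is the torsion-free part of $X_i$ for the torsion pair $(\mathcal{T},\mathfrak{F}(\mathcal{V}))$, so it lies in $\mathfrak{F}(\mathcal{V})$; thus the term condition of the first part is automatic. It then remains only to check $q_{t_{\mathcal{V}}}(X)\in\mathscr{D}^{(-\infty,-1]}(\mod\Lambda)$, after which the first part applies. Here I would use that $X$, being bounded with homology concentrated in degrees $\leqslant -1$, is quasi-isomorphic to its good truncation $\tau_{\leqslant -1}X$, a bounded complex whose terms vanish in all degrees $\geqslant 0$; applying $q_{t_{\mathcal{V}}}$ termwise preserves this vanishing, so $q_{t_{\mathcal{V}}}(\tau_{\leqslant -1}X)$ has representation support in $(-\infty,-1]$ and hence homological support there as well.

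The main obstacle is precisely this support bookkeeping. Because $q_{t_{\mathcal{V}}}$ is in general neither left nor right exact, the termwise functor does not respect quasi-isomorphisms, so one cannot read the homology of $q_{t_{\mathcal{V}}}(X)$ directly off that of $X$. My resolution is to fix a representative of $X$ with representation support in $(-\infty,-1]$ \emph{before} applying $q_{t_{\mathcal{V}}}$, which makes the support condition automatic and lets the first part run unchanged; the step I would verify most carefully is that this choice of representative matches the intended reading of $q_{t_{\mathcal{V}}}(X)$ in the surrounding argument. As a fallback one can instead track homology through the termwise short exact sequence $0\to t_{\mathcal{V}}(X)\to X\to q_{t_{\mathcal{V}}}(X)\to 0$ and its long exact homology sequence, reducing the claim to controlling $H_i(t_{\mathcal{V}}(X))$ in degrees $\geqslant -1$.
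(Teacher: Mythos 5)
Your proof is correct and takes essentially the same route as the paper's: the paper likewise reduces Lemma \ref{lem3.6} to the vanishing criterion of Lemma \ref{lem-syzygy-zero} with $m=\pd\mathcal{V}$, merely citing \cite[Lemma 3.6]{zheng2020upper} for the bounds $\pd Y_{i}\leqslant \pd\mathcal{V}$ and $\pd \Ker d_{i}\leqslant \pd\mathcal{V}$, which you instead derive directly from the filtration by modules in $\mathcal{V}$ and the closure of $\mathfrak{F}(\mathcal{V})$ under submodules. Your resolution of the ``in particular'' clause --- fixing a representative of $X$ whose terms are concentrated in degrees $\leqslant -1$ before applying $q_{t_{\mathcal{V}}}$ termwise --- is exactly the intended reading (this is how $q_{t_{\mathcal{V}}}(X)$ enters the triangles in the proof of Theorem \ref{maintheorem}, where any bounded complex is first shifted so that its nonzero terms lie in degrees $\leqslant -1$), and it is in fact forced rather than optional: for a representative with nonzero terms in degrees $\geqslant 0$ the conclusion can literally fail (take $Q\colon 1\to 2$, $\mathcal{V}=\{S_{2}\}$, and the acyclic complex $0\to S_{2}\to P_{1}\to S_{1}\to 0$ in degrees $1,0,-1$, whose termwise torsion-free part is $S_{2}[1]$ with $\Omega_{\mathscr{D}}^{1}(S_{2}[1])=S_{2}\neq 0$), so your long-exact-sequence fallback cannot succeed in general --- precisely because $H_{i}(t_{\mathcal{V}}(X))$ in degrees $\geqslant -1$ is uncontrollable, as the same example shows --- and should be discarded in favour of your primary fix.
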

\begin{proof}
  Similar to the proof of \cite[Lemma 3.6]{zheng2020upper}, we can get
  $\pd\Ker d_{i}\leqslant \pd \mathcal{V}<\infty $ and $\pd H_{i}(Y)\leqslant \pd \mathcal{V}<\infty $ for all $i$.
\end{proof}
%\begin{lemma}\label{lem3.6}
%Let $\mathcal{V}$ be a subset of $\mathcal{S}^{<\infty}$. Then the following complex
%\[\xymatrix{
%X: \cdots   \ar[rr]^{d_{i+2}} & & X_{i+1}\ar[rr]^{d_{i+1}}    & & X_{i}\ar[rr]^{d_{i}}
%& & X_{i-1}\ar[rr]^{d_{i-1}}   &&\cdots   &}\]
%with all $X_{i}$ in $\mod \Lambda$ induces a complex
%\[\xymatrix{
%q_{t_{\mathcal{V}}}(X): \cdots   \ar[rr]^{q_{t_{\mathcal{V}}}(d_{i+2})} & & q_{t_{\mathcal{V}}}(X_{i+1})\ar[rr]^{q_{t_{\mathcal{V}}}(d_{i+1})}
%& & q_{t_{\mathcal{V}}}(X_{i})\ar[rr]^{q_{t_{\mathcal{V}}}(d_{i})}
%& & q_{t_{\mathcal{V}}}(X_{i-1})\ar[rr]^{q_{t_{\mathcal{V}}}(d_{i-1})}   &&\cdots   &}\]
%such that $\pd\Ker q_{t_{\mathcal{V}}}(d_{i})\leqslant \pd \mathcal{V}<\infty $ and $\pd H_{i}(q_{t_{\mathcal{V}}}(X))\leqslant \pd \mathcal{V}<\infty $ for all $i$.
%\end{lemma}
%
%
%\begin{lemma}
%Let $\mathcal{V}$ be a subset of $\mathcal{S}^{<\infty}$. For a bounded complex $X\in \mathscr{D}^{[0,+\infty)}(\mod \Lambda)$,
%we have that $\Omega_{\mathscr{D}}^{\pd\mathcal{V}+1}(q_{t_{\mathcal{V}}}(X))$ is projective.
%\end{lemma}

\begin{theorem}\label{maintheorem}
Let $\Lambda$ be an artin algebra, and $\mathcal{V}$ a subset of all simple modules in $\mod\Lambda$.
Suppose that $\Lambda/\rad \Lambda$ has finite syzygy type, then
$\tridim D^{b}(\mod \Lambda)\leqslant \ell\ell^{t_{\mathcal{V}}}(\Lambda_{\Lambda})+\pd\mathcal{V}.$
\end{theorem}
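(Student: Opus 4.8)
The plan is to exhibit a single generator $M$ and prove $D^{b}(\mod\Lambda)=\langle M\rangle_{n+d+1}$, where $n=\ell\ell^{t_{\mathcal{V}}}(\Lambda_{\Lambda})$ and $d=\pd\mathcal{V}$; this yields $\tridim D^{b}(\mod\Lambda)\leqslant n+d$. We may assume $n$ and $d$ are finite (otherwise the inequality is vacuous), so $\mathcal{V}\subseteq\mathcal{S}^{<\infty}$ and the vanishing lemmas apply. Let $V$ be the module supplied by Lemma \ref{lem3.5}$(2)$ from the finite syzygy type of $\Lambda/\rad\Lambda$, and set $M=\Lambda\oplus V$. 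Fix $X\in D^{b}(\mod\Lambda)$. Since every $\langle M\rangle_{k}$ is closed under shifts, I would first shift $X$ so that its homological support lies in $(-\infty,-1]$, and then represent $X$ by a projective complex $P\in\mathscr{K}^{-,b}(\add\Lambda)$ with $P_{i}=0$ for all $i\geqslant 0$. Applying the additive functors $t_{\mathcal{V}}$, $\rad$, $q_{t_{\mathcal{V}}}$ termwise then yields complexes still concentrated in degrees $\leqslant -1$, hence lying in $\mathscr{D}^{(-\infty,-1]}(\mod\Lambda)$, which is exactly the hypothesis needed to invoke Lemmas \ref{lem-syzygy-zero} and \ref{lem3.6}.

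Second, I would build the $t_{\mathcal{V}}$-radical layer filtration of $P$ termwise: writing $F^{k}:=F^{k}_{t_{\mathcal{V}}}(P)$ we obtain subcomplexes $F^{n}\subseteq\cdots\subseteq F^{1}\subseteq F^{0}=P$, and for each $k$ two degreewise short exact sequences of complexes, $0\to t_{\mathcal{V}}(F^{k})\to F^{k}\to Y_{k}\to 0$ with $Y_{k}=q_{t_{\mathcal{V}}}(F^{k})$, and $0\to F^{k+1}\to t_{\mathcal{V}}(F^{k})\to S_{k}\to 0$ with $S_{k}=\top\, t_{\mathcal{V}}(F^{k})$. Splicing these produces, for $0\leqslant k\leqslant n-1$, triangles $F^{k+1}\to F^{k}\to C_{k}\to F^{k+1}[1]$ together with $S_{k}\to C_{k}\to Y_{k}\to S_{k}[1]$, where $C_{k}=F^{k}/F^{k+1}$. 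Here every $S_{k}$ has semisimple terms, every $Y_{k}$ has terms in $\mathfrak{F}(\mathcal{V})$, and the top piece $F^{n}$ also has terms in $\mathfrak{F}(\mathcal{V})$, because $\ell\ell^{t_{\mathcal{V}}}(\Lambda_{\Lambda})=n$ forces $t_{\mathcal{V}}(F^{n}_{t_{\mathcal{V}}}(\Lambda))=0$ and hence $t_{\mathcal{V}}(F^{n})=0$ termwise.

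The decisive step is to apply the derived syzygy $\Omega^{d+1}_{\mathscr{D}}$ to this filtration and exploit that it annihilates all the ``finite projective dimension'' pieces while collapsing the semisimple layers. By Lemma \ref{lem3.6} one has $\Omega^{d+1}_{\mathscr{D}}(Y_{k})=0$ and $\Omega^{d+1}_{\mathscr{D}}(F^{n})=0$, so applying $\Omega^{d+1}_{\mathscr{D}}$ (which preserves triangles by Lemma \ref{lemwei3} and direct sums by Lemma \ref{lemwei1}) to $S_{k}\to C_{k}\to Y_{k}$ gives $\Omega^{d+1}_{\mathscr{D}}(C_{k})\cong\Omega^{d+1}_{\mathscr{D}}(S_{k})$, and by finite syzygy type (Lemma \ref{lem3.5}$(2)$) each $\langle\Omega^{d+1}_{\mathscr{D}}(S_{k})\rangle_{1}\subseteq\langle V\rangle_{1}$. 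Feeding this into the syzygies of the triangles $F^{k+1}\to F^{k}\to C_{k}$ and inducting downward from $k=n-1$ (where $\Omega^{d+1}_{\mathscr{D}}(F^{n})=0$) to $k=0$, using associativity of $\diamond$ and Lemma \ref{lem2.5}, gives $\Omega^{d+1}_{\mathscr{D}}(X)\in\langle V\rangle_{n}$. This is precisely where the finite syzygy hypothesis is indispensable: it lets the $n$ separate semisimple layers be governed by a single module $V$ instead of accumulating new generators, and it removes the doubling present in the earlier bounds of Theorem \ref{thm1.1}.

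Finally, I would reconstruct $X$ from its $(d+1)$-st syzygy. Iterating the triangles $\Omega^{j+1}_{\mathscr{D}}(X)\to Q_{j}\to\Omega^{j}_{\mathscr{D}}(X)\to\Omega^{j+1}_{\mathscr{D}}(X)[1]$ of Lemma \ref{lemwei2} for $j=0,\ldots,d$ (each $Q_{j}$ projective) shows $X\in\langle\Lambda\rangle_{d+1}\diamond\langle\Omega^{d+1}_{\mathscr{D}}(X)\rangle_{1}$. Combining with $\Omega^{d+1}_{\mathscr{D}}(X)\in\langle V\rangle_{n}$ and applying Lemma \ref{lem2.5} yields $X\in\langle\Lambda\rangle_{d+1}\diamond\langle V\rangle_{n}\subseteq\langle\Lambda\oplus V\rangle_{n+d+1}$. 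As $X$ was arbitrary, $D^{b}(\mod\Lambda)=\langle\Lambda\oplus V\rangle_{n+d+1}$, so $\tridim D^{b}(\mod\Lambda)\leqslant n+d=\ell\ell^{t_{\mathcal{V}}}(\Lambda_{\Lambda})+\pd\mathcal{V}$. I expect the main obstacle to be the bookkeeping in the first and third steps: justifying that $X$ can be represented by a projective complex concentrated in degrees $\leqslant -1$ so that all filtration pieces remain in $\mathscr{D}^{(-\infty,-1]}(\mod\Lambda)$ (as required by the syzygy-vanishing lemmas), and checking carefully that $\Omega^{d+1}_{\mathscr{D}}$ indeed annihilates both $F^{n}$ and every $Y_{k}$ uniformly.
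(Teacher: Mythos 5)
Your argument is, in substance, the paper's own proof: the same generator $\Lambda\oplus V$ with $V$ coming from the finite syzygy type of $\Lambda/\rad\Lambda$ (Lemma \ref{lem3.5}), the same termwise torsion-layer analysis killed by $\Omega^{d+1}_{\mathscr{D}}$ via Lemma \ref{lem3.6}, the same reconstruction through the projective triangles of Lemma \ref{lemwei2}, and the same final count $\langle\Lambda\rangle_{d+1}\diamond\langle V\rangle_{n}\subseteq\langle\Lambda\oplus V\rangle_{n+d+1}$; your packaging of the layers as a filtration $F^{n}\subseteq\cdots\subseteq F^{0}$ with subquotient triangles $S_{k}\to C_{k}\to Y_{k}$ is only cosmetically different from the paper's recursion, which applies the two triangles $t_{\mathcal{V}}(X)\to X\to q_{t_{\mathcal{V}}}(X)$ and $F_{t_{\mathcal{V}}}(X)\to t_{\mathcal{V}}(X)\to\top t_{\mathcal{V}}(X)$ and then replaces $X$ by $F^{i}_{t_{\mathcal{V}}}(X)$. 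One step of yours is wrong as stated, and you rightly flagged it as the delicate point: there is in general no $P\in\mathscr{K}^{-,b}(\add\Lambda)$ with $P\cong X$ in $\mathscr{D}(\mod\Lambda)$ and $P_{i}=0$ for all $i\geqslant 0$ --- with the paper's homological indexing, projective resolutions extend toward $+\infty$, so such a bounded projective representative exists only when $X$ is perfect. The fix is immediate and is what the paper implicitly does: drop projectivity and take any bounded representative with terms in degrees $\leqslant -1$ (e.g.\ the smart truncation of $X$), since your filtration argument never uses projectivity of the terms, whereas boundedness of the $Y_{k}$ and of $F^{n}$ is genuinely needed to invoke Lemma \ref{lem3.6}. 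One smaller inaccuracy: the case $\mathcal{V}=\varnothing$ is not vacuous (there $\pd\mathcal{V}=-1$ and the bound reads $\ell\ell^{t_{\mathcal{V}}}(\Lambda_{\Lambda})-1$); the paper treats it, together with $\ell\ell^{t_{\mathcal{V}}}(\Lambda_{\Lambda})=0$, by referring to the proof of \cite[Theorem 3.8]{zheng2020upper}, though your main argument in fact degenerates correctly in both cases ($\mathfrak{F}(\varnothing)=\{0\}$ forces all $Y_{k}=0$ and $F^{n}=0$).
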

\begin{proof}
Two special cases are $\mathcal{V}=\varnothing$ and $\ell\ell^{t_{\mathcal{V}}}(\Lambda_{\Lambda})=0$, see the proof of \cite[Theorem 3.8]{zheng2020upper}.
Now we will consider the case $\ell\ell^{t_{\mathcal{V}}}(\Lambda_{\Lambda})=n\geqslant 1$ and $\pd \mathcal{V}=m<\infty$.
Since $\Lambda/\rad \Lambda$ has finite syzygy type, by Lemma \ref{lem3.5}, we can set $V\in \mod \Lambda$, such that
\begin{gather}\label{triangle0}
  \xymatrix{
 \langle\Omega_{\mathscr{D}}^{m+1}(\Lambda/\rad \Lambda)  \rangle_{1}\subseteq\langle\Omega_{\mathscr{D}}^{m+1}(V)  \rangle_{1}.
  }
\end{gather}

Taking a bounded complex $X\in \mathscr{D}^{(-\infty,-1]}$, by the following short exact sequences of
complexes
$$0\longrightarrow t_{\mathcal{V}}(X) \longrightarrow X\longrightarrow q_{t_{\mathcal{V}}}(X)\longrightarrow 0$$
and
$$0\longrightarrow F_{t_{\mathcal{V}}}(X) \longrightarrow t_{\mathcal{V}}(X)\longrightarrow \top t_{\mathcal{V}}(X)\longrightarrow 0,$$
we can get the following two triangles in $D^{b}(\mod\Lambda)$
\begin{gather}\label{triangle1}
  \xymatrix{
 t_{\mathcal{V}}(X) \longrightarrow X\longrightarrow q_{t_{\mathcal{V}}}(X)\longrightarrow t_{\mathcal{V}}(X)[1]
  }
\end{gather}
and
\begin{gather}\label{triangle2}
  \xymatrix{
 F_{t_{\mathcal{V}}}(X) \longrightarrow t_{\mathcal{V}}(X)\longrightarrow \top t_{\mathcal{V}}(X)\longrightarrow F_{t_{\mathcal{V}}}(X)[1].
  }
\end{gather}
Moreover, by Lemma \ref{lemwei2} and (\ref{triangle1}) and (\ref{triangle2}), we can obtain the following two triangles
\begin{gather}\label{triangle3}
  \xymatrix{
 \Omega_{\mathscr{D}}^{\pd\mathcal{V}+1}(t_{\mathcal{V}}(X)) \longrightarrow \Omega_{\mathscr{D}}^{m+1}(X)\longrightarrow \Omega_{\mathscr{D}}^{m+1}(q_{t_{\mathcal{V}}}(X))\longrightarrow \Omega_{\mathscr{D}}^{m+1}(t_{\mathcal{V}}(X)[1])
  }
\end{gather}
and
\begin{gather}\label{triangle4}
  \xymatrix{
 \Omega_{\mathscr{D}}^{m+1}(F_{t_{\mathcal{V}}}(X)) \longrightarrow \Omega_{\mathscr{D}}^{m+1}(t_{\mathcal{V}}(X))\longrightarrow
 \Omega_{\mathscr{D}}^{m+1}(\top t_{\mathcal{V}}(X))\longrightarrow \Omega_{\mathscr{D}}^{m+1}(F_{t_{\mathcal{V}}}(X)[1]).
  }
\end{gather}
By Lemma \ref{lem-syzygy-zero}, we have $\Omega_{\mathscr{D}}^{m+1}(q_{t_{\mathcal{V}}}(X))=0$ in $D^{b}(\mod\Lambda)$.
By the triangle (\ref{triangle3}), we have
\begin{gather}\label{triangle5}
  \xymatrix{
 \Omega_{\mathscr{D}}^{m+1}(t_{\mathcal{V}}(X)) \cong\Omega_{\mathscr{D}}^{m+1}(X).
  }
\end{gather}
And then we can get
 \begin{align*}
   \langle \Omega_{\mathscr{D}}^{m+1}(X) \rangle_{1}&=\langle\Omega_{\mathscr{D}}^{m+1}(t_{\mathcal{V}}(X)) \rangle_{1}\;\;\;\;\text{(by (\ref{triangle5}))}\\
                &\subseteq\langle\Omega_{\mathscr{D}}^{m+1}(F_{t_{\mathcal{V}}}(X))  \rangle_{1}\diamond \langle\Omega_{\mathscr{D}}^{m+1}(\top t_{\mathcal{V}}(X))  \rangle_{1}\;\;\;\;\text{(by (\ref{triangle4}))}\\
                 &\subseteq\langle\Omega_{\mathscr{D}}^{m+1}(F_{t_{\mathcal{V}}}(X))  \rangle_{1}\diamond \langle V \rangle_{1}\;\;\;\;\text{(by Lemma \ref{lem3.5})}.
    \end{align*}
By replacing $X$ with $F^{i}_{t_{\mathcal{V}}}(X)$ for each $1\leqslant i \leqslant n-1$,
we can get
$$\langle \Omega_{\mathscr{D}}^{m+1}(X) \rangle_{1}\subseteq\langle\Omega_{\mathscr{D}}^{m+1}(F^{n}_{t_{\mathcal{V}}}(X))  \rangle_{1}\diamond \langle V \rangle_{n}. $$
Note that $\ell\ell^{t_{\mathcal{V}}}(F^{n}_{t_{\mathcal{V}}}(X_{i}))=0$ for each $i$, we know that
$F^{n}_{t_{\mathcal{V}}}(X_{i})\in \mathfrak{F}(\mathcal{V})$ by \cite[Proposition 3.1]{zheng2020upper}.
Now by Lemma \ref{lem3.6}, we have $\Omega_{\mathscr{D}}^{m+1}(F^{n}_{t_{\mathcal{V}}}(X))=0$ in
$D^{b}(\mod\Lambda)$. Then
\begin{gather}\label{triangle6}
  \xymatrix{
 \langle \Omega_{\mathscr{D}}^{m+1}(X) \rangle_{1}\subseteq\langle V\rangle_{n}.
  }
\end{gather}
By Lemma \ref{lemwei3}, for each integer $j$, we have the following triangles
$$ \Omega_{\mathscr{D}}^{j+1}(X) \longrightarrow Q_{j}
 \longrightarrow \Omega_{\mathscr{D}}^{j}(X)\longrightarrow \Omega_{\mathscr{D}}^{j+1}(X)[1] $$
 with $Q_{j}$ projective. We can get
 \begin{align*}
   \langle \Omega_{\mathscr{D}}^{j}(X) \rangle_{1}&\subseteq\langle Q_{j} \rangle_{1}\diamond\langle\Omega_{\mathscr{D}}^{j+1}(X)[1] \rangle_{1}\\
                &\subseteq\langle\Lambda \rangle_{1}\diamond\langle\Omega_{\mathscr{D}}^{j+1}(X)\rangle_{1}.
    \end{align*}
And by Lemma \ref{lem2.5}, we have
\begin{align*}
\langle X\rangle_{1}&\subseteq\langle \Lambda \rangle_{m+1}\diamond \langle \Omega_{\mathscr{D}}^{m+1}(X) \rangle_{1}\\
                &\subseteq\langle \Lambda \rangle_{m+1}\diamond \langle V \rangle_{n}.\;\;\;\;\text{(by (\ref{triangle6}))}\\
                &\subseteq\langle \Lambda \oplus V \rangle_{m+1+n}.\;\;\;\;\text{(by Lemma \ref{lem2.5})}
    \end{align*}
Note that, for each complex $X\in D^{b}(\mod\Lambda)$,
 we have $X[p]\in \mathscr{D}^{(-\infty,-1]}(\mod\Lambda)$
 for some integer $p$,
and also note that $\langle X\rangle_{1}=\langle X[p]\rangle_{1}$. Thus, for each complex $X\in D^{b}(\mod\Lambda)$,
we always have $D^{b}(\mod\Lambda)= \langle \Lambda \oplus V \rangle_{m+1+n}.$
By Definition \ref{tri.dimenson2.1}, we have $\tridim D^{b}(\mod\Lambda)\leqslant n+m.$
\end{proof}

\begin{corollary}
Let $\Lambda$ be an artin algebra, and $\mathcal{V}$ a subset of all simple modules in $\mod\Lambda$.
If $\ell\ell^{t_{\mathcal{V}}}(\Lambda_{\Lambda}) \leqslant 2$, then
$\tridim D^{b}(\mod \Lambda)\leqslant \pd \mathcal{V}+\ell\ell^{t_{\mathcal{V}}}(\Lambda_{\Lambda})\leqslant \pd \mathcal{V}+2$.
\end{corollary}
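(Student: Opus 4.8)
The plan is to prove that every object of $D^{b}(\mod\Lambda)$ lies in $\langle\Lambda\oplus V\rangle_{m+n+1}$, where $m=\pd\mathcal{V}$, $n=\ell\ell^{t_{\mathcal{V}}}(\Lambda_{\Lambda})$, and $V$ is the module witnessing the finite syzygy type of $\Lambda/\rad\Lambda$ supplied by Lemma \ref{lem3.5}(2); by Definition \ref{tri.dimenson2.1} this gives $\tridim D^{b}(\mod\Lambda)\leqslant m+n$. After treating the degenerate cases $\mathcal{V}=\varnothing$ and $\ell\ell^{t_{\mathcal{V}}}(\Lambda_{\Lambda})=0$ by reference to \cite[Theorem 3.8]{zheng2020upper}, I would assume $n\geqslant 1$ and $m<\infty$ and, using $\langle X\rangle_{1}=\langle X[p]\rangle_{1}$, reduce to a representative $X\in\mathscr{D}^{(-\infty,-1]}(\mod\Lambda)$.

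The heart of the argument is to locate the syzygy $\Omega_{\mathscr{D}}^{m+1}(X)$ inside $\langle V\rangle_{n}$. For this I would feed the two short exact sequences of complexes attached to the torsion pair, namely $0\to t_{\mathcal{V}}(X)\to X\to q_{t_{\mathcal{V}}}(X)\to 0$ and $0\to F_{t_{\mathcal{V}}}(X)\to t_{\mathcal{V}}(X)\to\top t_{\mathcal{V}}(X)\to 0$, into the triangle machinery of Lemma \ref{lemwei3} after applying $\Omega_{\mathscr{D}}^{m+1}$. Since the terms of $q_{t_{\mathcal{V}}}(X)$ lie in $\mathfrak{F}(\mathcal{V})$, Lemma \ref{lem3.6} kills $\Omega_{\mathscr{D}}^{m+1}(q_{t_{\mathcal{V}}}(X))$, giving $\Omega_{\mathscr{D}}^{m+1}(t_{\mathcal{V}}(X))\cong\Omega_{\mathscr{D}}^{m+1}(X)$; and since $\top t_{\mathcal{V}}(X)$ is semisimple, Lemma \ref{lem3.5}(2) places its syzygy in $\langle V\rangle_{1}$. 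The second triangle then yields
$$\langle\Omega_{\mathscr{D}}^{m+1}(X)\rangle_{1}\subseteq\langle\Omega_{\mathscr{D}}^{m+1}(F_{t_{\mathcal{V}}}(X))\rangle_{1}\diamond\langle V\rangle_{1}.$$

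Next I would iterate this inclusion, replacing $X$ by $F^{i}_{t_{\mathcal{V}}}(X)$ for $1\leqslant i\leqslant n-1$, to obtain after $n$ steps
$$\langle\Omega_{\mathscr{D}}^{m+1}(X)\rangle_{1}\subseteq\langle\Omega_{\mathscr{D}}^{m+1}(F^{n}_{t_{\mathcal{V}}}(X))\rangle_{1}\diamond\langle V\rangle_{n}.$$
Because $\ell\ell^{t_{\mathcal{V}}}(F^{n}_{t_{\mathcal{V}}}(X_{i}))=0$, each $F^{n}_{t_{\mathcal{V}}}(X_{i})$ lies in $\mathfrak{F}(\mathcal{V})$ by \cite[Proposition 3.1]{zheng2020upper}, so Lemma \ref{lem3.6} forces $\Omega_{\mathscr{D}}^{m+1}(F^{n}_{t_{\mathcal{V}}}(X))=0$ and hence $\langle\Omega_{\mathscr{D}}^{m+1}(X)\rangle_{1}\subseteq\langle V\rangle_{n}$. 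To descend back to $X$, I would apply the projective triangles of Lemma \ref{lemwei2}, which give $\langle\Omega_{\mathscr{D}}^{j}(X)\rangle_{1}\subseteq\langle\Lambda\rangle_{1}\diamond\langle\Omega_{\mathscr{D}}^{j+1}(X)\rangle_{1}$; iterating $m+1$ times and applying Lemma \ref{lem2.5} gives $\langle X\rangle_{1}\subseteq\langle\Lambda\rangle_{m+1}\diamond\langle V\rangle_{n}\subseteq\langle\Lambda\oplus V\rangle_{m+1+n}$, completing the bound.

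The main obstacle I anticipate is controlling the recursion that replaces $X$ by $F_{t_{\mathcal{V}}}(X)=\rad\,t_{\mathcal{V}}(X)$: I must check that membership in $\mathscr{D}^{(-\infty,-1]}(\mod\Lambda)$, together with the uniform finiteness of projective dimensions of terms and kernels required by Lemma \ref{lem3.6}, survives each application of $F_{t_{\mathcal{V}}}$, and that the $t_{\mathcal{V}}$-radical layer length drops by exactly one at each step so that precisely $n$ iterations reduce the torsion content to $\mathfrak{F}(\mathcal{V})$. The triangle bookkeeping via Lemmas \ref{lemwei1}--\ref{lem3.6} is otherwise routine once these layer-length and finiteness invariants are in place.
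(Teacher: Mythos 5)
There is a genuine gap, and it sits at the very first step of your argument. You begin by taking ``the module $V$ witnessing the finite syzygy type of $\Lambda/\rad\Lambda$ supplied by Lemma \ref{lem3.5}(2)'' --- but the corollary does not assume that $\Lambda/\rad\Lambda$ has finite syzygy type. Its only hypothesis is $\ell\ell^{t_{\mathcal{V}}}(\Lambda_{\Lambda})\leqslant 2$. Lemma \ref{lem3.5}(2) is conditional on finite syzygy type, so invoking it without establishing that condition is unjustified, and everything downstream (the inclusion $\langle\Omega_{\mathscr{D}}^{m+1}(\top t_{\mathcal{V}}(X))\rangle_{1}\subseteq\langle V\rangle_{1}$ and hence the whole bound) rests on an unproved premise. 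What you have written is, in effect, a re-derivation of the proof of Theorem \ref{maintheorem}, which is fine as far as it goes but proves the wrong statement: the entire content of the corollary is the bridge from the layer-length hypothesis to the syzygy-type hypothesis.

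The paper closes this gap in one line: by the result of \cite{zheng2021radicalfinite}, an algebra with $\ell\ell^{t_{\mathcal{V}}}(\Lambda_{\Lambda})\leqslant 2$ (and $\pd\mathcal{V}<\infty$, the case $\pd\mathcal{V}=\infty$ being trivial since the asserted bound is then vacuous) is syzygy-finite; in particular $\Lambda/\rad\Lambda$ has finite syzygy type, and Theorem \ref{maintheorem} applies directly. So the correct proof is a two-step citation, not a repetition of the main theorem's triangle argument. Your worry at the end about the recursion through $F_{t_{\mathcal{V}}}$ and the preservation of the finiteness invariants is a concern internal to the proof of Theorem \ref{maintheorem} (already settled there and in \cite[Theorem 3.8]{zheng2020upper}); it is orthogonal to the actual missing step here. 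To repair your write-up, replace the opening appeal to Lemma \ref{lem3.5}(2) with the syzygy-finiteness result of \cite{zheng2021radicalfinite}, after which you may simply quote Theorem \ref{maintheorem} rather than reproving it.
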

\begin{proof}
We only need to consider the case $\pd \mathcal{V}<\infty$.
By \cite{zheng2021radicalfinite}, we know that $\Lambda$ is syzygy finite algebra, and then we know
that $\Lambda/\rad \Lambda$ has syzygy finite type. And by Theorem \ref{maintheorem}, we get
$\tridim D^{b}(\mod \Lambda)\leqslant \pd \mathcal{V}+\ell\ell^{t_{\mathcal{V}}}(\Lambda_{\Lambda}) $.
\end{proof}

\begin{corollary}
Let $\Lambda$ be an artin algebra, and $\mathcal{V}$ a subset of all simple modules in $\mod\Lambda$.
If $\gldim \Lambda <\infty$, then
$\tridim D^{b}(\mod \Lambda)\leqslant \pd \mathcal{V}+\ell\ell^{t_{\mathcal{V}}}(\Lambda_{\Lambda}) $.
\end{corollary}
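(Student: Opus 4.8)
The plan is to prove the slightly sharper statement that $D^{b}(\mod\Lambda)=\langle \Lambda\oplus V\rangle_{m+n+1}$, where I abbreviate $m:=\pd\mathcal{V}$ and $n:=\ell\ell^{t_{\mathcal{V}}}(\Lambda_{\Lambda})$, and $V$ is the module furnished by the finite syzygy type of $\Lambda/\rad\Lambda$ via Lemma \ref{lem3.5}(2); by Definition \ref{tri.dimenson2.1} this immediately yields $\tridim D^{b}(\mod\Lambda)\leqslant m+n$. First I would dispose of the degenerate cases $\mathcal{V}=\varnothing$ and $n=0$, which already occur in the proof of \cite[Theorem 3.8]{zheng2020upper}, and so assume $n\geqslant 1$ and $m<\infty$. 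Since $\langle X\rangle_{1}=\langle X[p]\rangle_{1}$ for every shift, it then suffices to treat an arbitrary $X\in\mathscr{D}^{(-\infty,-1]}(\mod\Lambda)$.

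The heart of the argument is to control the single derived syzygy $\Omega^{m+1}_{\mathscr{D}}(X)$ using the torsion pair $(\mathcal{T},\mathfrak{F}(\mathcal{V}))$. Applying the syzygy functor, which is compatible with triangles by Lemma \ref{lemwei3}, to the two canonical short exact sequences
$$0\to t_{\mathcal{V}}(X)\to X\to q_{t_{\mathcal{V}}}(X)\to 0 \quad\text{and}\quad 0\to F_{t_{\mathcal{V}}}(X)\to t_{\mathcal{V}}(X)\to \top t_{\mathcal{V}}(X)\to 0$$
produces two triangles after $\Omega^{m+1}_{\mathscr{D}}$. By Lemma \ref{lem3.6} the torsion-free part satisfies $\Omega^{m+1}_{\mathscr{D}}(q_{t_{\mathcal{V}}}(X))=0$, so the first triangle collapses to an isomorphism $\Omega^{m+1}_{\mathscr{D}}(X)\cong\Omega^{m+1}_{\mathscr{D}}(t_{\mathcal{V}}(X))$. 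The second triangle then gives
$$\langle\Omega^{m+1}_{\mathscr{D}}(X)\rangle_{1}\subseteq\langle\Omega^{m+1}_{\mathscr{D}}(F_{t_{\mathcal{V}}}(X))\rangle_{1}\diamond\langle\Omega^{m+1}_{\mathscr{D}}(\top t_{\mathcal{V}}(X))\rangle_{1},$$
and, crucially, because $\top t_{\mathcal{V}}(X)$ is semisimple the finite-syzygy hypothesis through Lemma \ref{lem3.5}(2) bounds the second factor by $\langle V\rangle_{1}$. This is precisely the step that consumes only one $\diamond$-factor of $V$ per layer, which is what yields the tight coefficient instead of the larger ones in Theorem \ref{thm1.1}(3)(4).

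I would then iterate this inclusion, replacing $X$ by $F^{i}_{t_{\mathcal{V}}}(X)$ for $i=1,\dots,n-1$ and using associativity of $\diamond$, to arrive at $\langle\Omega^{m+1}_{\mathscr{D}}(X)\rangle_{1}\subseteq\langle\Omega^{m+1}_{\mathscr{D}}(F^{n}_{t_{\mathcal{V}}}(X))\rangle_{1}\diamond\langle V\rangle_{n}$. The reason for stopping at $n=\ell\ell^{t_{\mathcal{V}}}(\Lambda_{\Lambda})$ is that each entry $F^{n}_{t_{\mathcal{V}}}(X_{i})$ now has $t_{\mathcal{V}}$-radical layer length $0$, hence lies in $\mathfrak{F}(\mathcal{V})$ by \cite[Proposition 3.1]{zheng2020upper}, so Lemma \ref{lem3.6} forces $\Omega^{m+1}_{\mathscr{D}}(F^{n}_{t_{\mathcal{V}}}(X))=0$ and the leftmost factor vanishes, leaving $\langle\Omega^{m+1}_{\mathscr{D}}(X)\rangle_{1}\subseteq\langle V\rangle_{n}$.

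Finally I would recover $X$ itself from its $(m+1)$st syzygy. Feeding the projective syzygy triangles of Lemma \ref{lemwei2} into the $\diamond$-calculus $m+1$ times gives $\langle X\rangle_{1}\subseteq\langle\Lambda\rangle_{m+1}\diamond\langle\Omega^{m+1}_{\mathscr{D}}(X)\rangle_{1}\subseteq\langle\Lambda\rangle_{m+1}\diamond\langle V\rangle_{n}$, and Lemma \ref{lem2.5} collapses this to $\langle X\rangle_{1}\subseteq\langle\Lambda\oplus V\rangle_{m+n+1}$. As $X$ was arbitrary, $D^{b}(\mod\Lambda)=\langle\Lambda\oplus V\rangle_{m+n+1}$, and the bound follows. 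I expect the main obstacle to be the bookkeeping in the iteration: one must check that the two vanishing statements, $\Omega^{m+1}_{\mathscr{D}}(q_{t_{\mathcal{V}}}(X))=0$ and $\Omega^{m+1}_{\mathscr{D}}(F^{n}_{t_{\mathcal{V}}}(X))=0$, genuinely apply at the level of complexes, which rests on the uniform projective-dimension bound $\pd\mathcal{V}=m$ feeding Lemma \ref{lem-syzygy-zero}, and that the functors $t_{\mathcal{V}},q_{t_{\mathcal{V}}},F_{t_{\mathcal{V}}},\top$ keep the iterated terms inside $\mathscr{D}^{(-\infty,-1]}(\mod\Lambda)$ so that the hypotheses of Lemmas \ref{lem-syzygy-zero} and \ref{lem3.6} remain in force at each stage.
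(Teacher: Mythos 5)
Your argument is correct, but it is not the paper's proof of this corollary: the paper disposes of the corollary in two lines, observing that $\gldim\Lambda<\infty$ forces $\Lambda/\rad\Lambda$ to have finite syzygy type and then citing Theorem \ref{maintheorem}. What you have written out is, almost line for line, the paper's proof of Theorem \ref{maintheorem} itself --- the two torsion-pair triangles, the vanishing of $\Omega^{m+1}_{\mathscr{D}}$ on $q_{t_{\mathcal{V}}}(X)$ and on $F^{n}_{t_{\mathcal{V}}}(X)$ via Lemma \ref{lem3.6}, the one-copy-of-$V$-per-layer iteration through Lemma \ref{lem3.5}(2), and the final descent $\langle X\rangle_{1}\subseteq\langle\Lambda\rangle_{m+1}\diamond\langle\Omega^{m+1}_{\mathscr{D}}(X)\rangle_{1}$ using Lemmas \ref{lemwei2} and \ref{lem2.5}. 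So your route is the same mathematics, merely inlined rather than cited, and all the steps check out against the paper.

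The one step you glossed over is, however, precisely the content the corollary adds to the theorem: you take ``the module $V$ furnished by the finite syzygy type of $\Lambda/\rad\Lambda$'' as given, but finite syzygy type is a hypothesis of Theorem \ref{maintheorem}, not of this corollary, and under the stated hypothesis $\gldim\Lambda<\infty$ it must be deduced. The deduction is a one-liner: with $g:=\gldim\Lambda$ one has $\Omega^{i}(\Lambda/\rad\Lambda)=0$ for all $i>g$, so $\Omega^{i}(\Lambda/\rad\Lambda)\in\add\bigl(\bigoplus_{i=0}^{g}\Omega^{i}(\Lambda/\rad\Lambda)\bigr)$ for every $i\geqslant 0$, which is exactly Definition \ref{syzygy-type}; this also guarantees $m=\pd\mathcal{V}\leqslant g<\infty$, which your reduction to the case $m<\infty$ silently uses. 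With that sentence added, your proof is complete and agrees with the paper's.
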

\begin{proof}
Since $\gldim \Lambda <\infty$, we know that $\Lambda/\rad \Lambda$ has syzygy finite type.
And by Theorem \ref{maintheorem}, we get
$\tridim D^{b}(\mod \Lambda)\leqslant \pd \mathcal{V}+\ell\ell^{t_{\mathcal{V}}}(\Lambda_{\Lambda}) $.
\end{proof}

\begin{remark}{\rm
If we consider the algebra $\Lambda$ such that $\Lambda/\rad\Lambda$ has cosyzygy finite
 type(\cite[Definition 7.1]{rickard2019unbounded}),
 then we can get the dual results. In fact, for a module $M\in\mod \Lambda$, we use $\id M$ to denote the
 injective dimensions of $M$;
in particular, set $\id M=-1$ if $M=0$.
 For a subclass $\mathcal{B}$ of $\mod \Lambda$,
  the {\bf injective dimension} $\id\mathcal{B}$
of $\mathcal{B}$ is defined as
\begin{equation*}
\id \mathcal{B}=
\begin{cases}
\sup\{\id M\;|\; M\in \mathcal{B}\}, & \text{if} \;\; \mathcal{B}\neq \varnothing;\\
-1,&\text{if} \;\; \mathcal{B}=\varnothing.
\end{cases}
\end{equation*}
Let $\Lambda$ be an artin algebra, and $\mathcal{V}$ a subset of all simple modules in $\mod\Lambda$.
Suppose that $\Lambda/\rad \Lambda$ has finite cosyzygy type, then
$\tridim D^{b}(\mod \Lambda)\leqslant \ell\ell^{t_{\mathcal{V}}}(\Lambda_{\Lambda})+\id\mathcal{V}.$

}
\end{remark}
\begin{example}\label{ex-3.21}
{\rm (\cite[Example 3.10]{zheng2020extension})
Consider the bound quiver algebra $\Lambda=kQ/I$, where $k$ is a field and $Q$ is given by
$$\xymatrix{
&2n+1\\
{2n}&1\ar[l]^{\alpha_{2n}}\ar[u]^{\alpha_{2n+1}}\ar[r]^{\alpha_{1}} \ar[d]^{\alpha_{n+1}}
&2\ar[r]^{\alpha_{2}} &{3}\ar[r]^{\alpha_{3}}&\cdots \ar[r]^{\alpha_{n-1}}&{n}\\
&n+1\ar[r]^{\alpha_{n+2}}
&n+2\ar[r]^{\alpha_{n+3}}&n+3\ar[r]^{\alpha_{n+4}}&\cdots\ar[r]^{\alpha_{2n-1}}&2n-1
}$$
and $I$ is generated by
$\{\alpha_{i}\alpha_{i+1}\;| \;n+1\leqslant i\leqslant 2n-1\}$ with $n\geqslant 5$.

Let $\mathcal{V}:=\{S(i)\mid 2 \leqslant i \leqslant n\}$, we can see
$\pd\mathcal{V}=1$ and
$\ell\ell^{t_{\mathcal{V}}}(\Lambda_{\Lambda})=2$.

(1) Since $\LL(\Lambda)=n$ and $\gldim \Lambda=n-1$, we have
$$\tridim D^{b}(\mod \Lambda) \leqslant \min\{ \gldim \Lambda, \LL(\Lambda)-1\}=n-1$$ by Theorem \ref{thm1.1}(1)(2).

(2) Since $\gldim \Lambda<\infty$, we know
that $\Lambda/\rad\Lambda$ has syzygy finite type.
 By Theorem \ref{maintheorem}, we have
$$\tridim D^{b}(\mod \Lambda) \leqslant \pd \mathcal{V}+\ell\ell^{t_{\mathcal{V}}}(\Lambda)=1+2=3.$$
Note that, the difference of the upper bounds between $n-1$ and $3$ may be arbitrarily large.
}
\end{example}

\vspace{0.6cm}

{\bf Acknowledgements.}
The author would like to thank Professor Zhaoyong Huang for his encouragement.
This work was supported by the National Natural Science Foundation of China(Grant No. 12001508).

%The first author would like to thank  for his suggestions.

%The authors would like to thank the referee for the encouraging comments.
%
\bibliographystyle{abbrv}
%%\bibliography{bibfile}
%\bibliographystyle{amsalpha}%
%\bibliographystyle{alpha}%
%\bibliographystyle{amsplain}%
%\bibliographystyle{plain}%
%\bibliographystyle{abbrv}%  ��д��ĸ˳��, X. Ma
%\bibliographystyle{siam}%  ��д��ĸ˳��, X. Ma,  ���±�����б��
%\bibliographystyle{unsrt}%
%\bibliography{ref}

\end{document}